\theoremstyle{plain}
\newtheorem{theorem}{Theorem}[section]
\newtheorem{lemma}[theorem]{Lemma}
\newtheorem{corollary}[theorem]{Corollary}
\newtheorem{definition}[theorem]{Definition}
\newtheorem{proposition}[theorem]{Proposition}
\newcommand{\bnum}{\begin{enumerate}}
\newcommand{\enum}{\end{enumerate}}
\numberwithin{equation}{section}
\begin{document}

\title{On commuting probability of finite rings II}
\author{Parama Dutta and Rajat Kanti Nath\footnote{Corresponding author}}
\date{}
\maketitle
\begin{center}\small{\it
Department of Mathematical Sciences, Tezpur University,\\ Napaam-784028, Sonitpur, Assam, India.\\

%Department of Mathematical Sciences,   Tezpur University,\\ Napaam-784028, Sonitpur, Assam, India.\\

%Department of Mathematical Sciences,   Tezpur University,\\ Napaam-784028, Sonitpur, Assam, India.\\

Emails:\, parama@gonitsora.com and rajatkantinath@yahoo.com}
\end{center}

\medskip

\begin{abstract}
The aim of this paper is to study  the probability that the commutator of an arbitrarily chosen pair of elements, each from two different subrings of a finite non-commutative ring equals a given element of that ring. We obtain several results on this probability including a computing formula, some bounds and characterizations.
%We further study the isoclinism between two pairs of subrings of two finite non-commutative rings and show that the commuting probability is invariant under this property.
\end{abstract}

\medskip

\noindent {\small{\textit{Key words:}  finite ring, commuting probability, ${\mathbb{Z}}$-isoclinism of rings.}}

\noindent {\small{\textit{2010 Mathematics Subject Classification:}
16U70, 16U80.}}

\medskip

\section{Introduction}
Throughout this paper $R$  denotes a finite non-commutative ring. The commuting probability of $R$, denoted by $\Pr(R)$, is the probability that a randomly chosen pair of elements of $R$ commute. That is
\[
\Pr(R) = \frac{|\lbrace(r, s)\in R\times R : [r, s] = 0 \rbrace|}{|R\times R|}
\]
where $[r, s] := rs - sr$ is the additive commutator of $r$ and $s$ and $0$ is the zero element of $R$. Clearly $\Pr(R) =  1$ if and only if $R$ is commutative. The study of commuting probability of finite rings  was initiated by MacHale \cite{dmachale} in the year 1976 motivated by the commuting probability of finite groups. After Erd$\ddot{\rm o}s$ \cite{pEpT68}, many authors have worked on  the commuting probability of finite groups and its generalizations (conf. \cite{Dnp13} and the references therein) but somehow people have neglected  commuting probability of finite rings. At this moment, we have very few papers in the literature on  commuting probability of finite rings \cite{BM,BMS,jutireka,jutirekha2,dmachale}. In this paper, we study a generalization of $\Pr(R)$.

Let  $S$ and $K$ be two subrings of $R$ and $r \in R$. We define ${\Pr}_r(S, K)$ in the following way
\begin{equation}\label{mainformula}
{\Pr}_r(S, K) = \frac{|\lbrace(s, k)\in S\times K : [s,k] = r\rbrace|} {|S\times K|}.
\end{equation}
Thus ${\Pr}_r(S, K)$ is the probability that the additive commutator of a randomly chosen pair of elements, one from $S$ and the other from $K$, is equal to a given element $r$ of $R$. This generalizes $\Pr(R)$ since ${\Pr}_r(S, K) = \Pr(R)$ if $S = K = R$ and $r = 0$. If $r = 0$  then
\[
{\Pr}_r(S, K) = \Pr(S, K) = \frac{|\lbrace(s, k) \in S\times K : sk = ks\rbrace|} {|S\times K|}.
\]
It may be mentioned here that some connections between $\Pr(S, K)$ and generalized non-commuting graph of $R$ can be found in \cite{jutirekha2}. In \cite{jutireka}, $\Pr(S, R)$ is studied extensively.
% motivated by the works of Erfanian et al. \cite{erl, er2}.

In this paper, we obtain several results on ${\Pr}_r(S, K)$   including a computing formula, some bounds and characterizations.  The motivation of this paper lies in \cite{DN10, nY15,PS08} where analogous generalizations of  commuting probability of finite groups are studied.

For any two subrings $S$ and $K$, we write $[S, K]$ and $[s,K]$ for $s\in S$  to denote the additive subgroups of $(R, +)$ generated by the sets $\lbrace [s, k] : s\in S,k\in K\rbrace$ and $\lbrace [s, k] : k\in K\rbrace$ respectively. It can be seen that any element of $[s,K]$ is of the form $[s, k]$ for some $k \in K$ and so $[s,K] = \{[s, k] : k\in K\}$.
 Let  $Z(S, K) := \{s \in S : sk = ks\,  \forall k \in K\}$. Then   $Z(S, K) = S\cap Z(K)$ if $S\subseteq K$ and $Z(K, K) = Z(K)$, the center of $K$. Also, for $r \in R$ the set $C_S(r) := \{s\in S : sr = rs\}$   is a subring of $S$ and $\underset{r \in K}{\cap} C_S(r) = Z(S, K)$. We write   $R/S$ or $\frac{R}{S}$  to denote the additive factor group, for any subring $S$ of $R$,  and $|R : S|$ to denote the index of    $(S, +)$ in   $(R, +)$.     The isomorphisms considered in this paper are the additive group isomorphisms.
 It is easy to see that
${\Pr}_r(S, K) = 1$ if and only if $r = 0$  and $[S,K] = \{0\}$.
Also,
${\Pr}_r(S, K) = 0$  if and only if $r\notin \{[s,k]:s\in S,k\in K\}$.
Therefore, we consider $r$ to be an element of $\{[s,k]:s\in S,k\in K\}$ throughout the paper.

\section{Preliminary results}

%If $S$ and $K$ have the same unit $1$ and $C_K(s)=\{1\}$ for all $s\in S\setminus \{1\}$ then it can be seen that
%\[
%Pr(S,K)=\frac {1}{|S|}+\frac {1}{|K|}-\frac {1}{|S||K|}.
%\]

In this section, we deduce some elementary results on ${\Pr}_r(S, K)$ and derive a computing formula for ${\Pr}_r(S, K)$. We begin with the following result which shows that ${\Pr}_r(S, K)$ is not symmetric with respect to $S$ and $K$.

%The following proposition talks about the symmetry property of $Pr_r(S,K)$ with respect to $S$ and $K$.

\begin{proposition}\label{symmetricity}
Let $S$ and $K$ be two subrings of $R$ and $r \in [S, K]$. Then
${\Pr}_r(S, K) = {\Pr}_{-r}(K, S)$. However, if $2r = 0$  then ${\Pr}_r(S, K) = {\Pr}_{r}(K, S)$.
\end{proposition}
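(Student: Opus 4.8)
The plan is to exploit the antisymmetry of the additive commutator, namely $[k,s] = ks - sk = -(sk - ks) = -[s,k]$, together with the coordinate-swap bijection. First I would write out both quantities explicitly from the defining formula \eqref{mainformula}: the numerator of ${\Pr}_r(S,K)$ counts the pairs $(s,k) \in S \times K$ with $[s,k] = r$, while the numerator of ${\Pr}_{-r}(K,S)$ counts the pairs $(k,s) \in K \times S$ with $[k,s] = -r$.

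Next I would verify that the map $(s,k) \mapsto (k,s)$ restricts to a bijection between these two solution sets. The key step is the chain of equivalences $[k,s] = -r \iff -[s,k] = -r \iff [s,k] = r$, which follows at once from the antisymmetry identity above. Since this map is plainly a bijection from $S \times K$ onto $K \times S$, it carries the solution set of $[s,k]=r$ onto that of $[k,s]=-r$, so the two numerators coincide. As the denominators $|S \times K|$ and $|K \times S|$ are equal as well, the first identity ${\Pr}_r(S,K) = {\Pr}_{-r}(K,S)$ follows.

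For the second assertion I would observe that the hypothesis $2r = 0$ means $r + r = 0$ in $(R,+)$, hence $-r = r$. Substituting this into the identity already established gives ${\Pr}_r(S,K) = {\Pr}_{-r}(K,S) = {\Pr}_r(K,S)$, as required.

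Since every step is a direct computation, there is no serious obstacle; the only point requiring care is tracking the sign in the antisymmetry relation and confirming that the coordinate swap genuinely sends solutions of $[s,k]=r$ to solutions of $[k,s]=-r$, rather than to solutions of some other equation.
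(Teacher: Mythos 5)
Your proposal is correct and follows essentially the same route as the paper: the coordinate-swap $(s,k)\mapsto(k,s)$ as a bijection between the two solution sets, justified by the antisymmetry $[k,s]=-[s,k]$, with the second claim reduced to $-r=r$ when $2r=0$. No issues.
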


\begin{proof}
Let $X = \{(s, k) \in S \times K : [s, k] = r\}$ and $Y = \{(k, s) \in K \times S : [k, s] = -r\}$. It is easy to see that $(s, k) \mapsto (k, s)$ defines a bijective mapping from $X$ to $Y$. Therefore, $|X| = |Y|$ and the result follows.

%The first part is clear from the fact that $[s,k]^{-1}=[k,s]$.

Second part follows from the fact that $r = -r$ if $2r = 0$.
\end{proof}

%In the following sense  $Pr_r(S,K)$ obeys the Cartesian product.

\begin{proposition}
Let $S_i$ and $K_i$ be two subrings of finite non-commutative rings $R_i$ for $i = 1, 2$ respectively. If $(r_1, r_2) \in R_1 \times R_2$  then
%Let $R_1$ and $R_2$ be two finite non-commutative rings with subrings $S_1$, $K_1$ of $R_1$ and $S_2$, $K_2$ of $R_2$. If $r_1\in R_1$ and $r_2\in R_2$ then
\[
{\Pr}_{(r_1, r_2)}(S_1 \times S_2, K_1\times K_2) = {\Pr}_{r_1}(S_1, K_1){\Pr}_{r_2}(S_2, K_2).
\]
\end{proposition}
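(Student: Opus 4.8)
The plan is to reduce everything to the defining formula \eqref{mainformula} and to exploit the fact that the additive commutator in a direct product ring is computed coordinatewise. First I would record that, since addition and multiplication in $R_1 \times R_2$ are both componentwise, for any $(s_1, s_2) \in S_1 \times S_2$ and $(k_1, k_2) \in K_1 \times K_2$ we have
\[
[(s_1, s_2), (k_1, k_2)] = \big([s_1, k_1],\, [s_2, k_2]\big).
\]
Consequently this commutator equals $(r_1, r_2)$ if and only if $[s_1, k_1] = r_1$ \emph{and} $[s_2, k_2] = r_2$ hold simultaneously.

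The core of the argument is then a bijection that decouples the two coordinates. Writing $X$ for the numerator set counting pairs in ${\Pr}_{(r_1,r_2)}(S_1 \times S_2, K_1 \times K_2)$ and $X_i$ for the numerator set of ${\Pr}_{r_i}(S_i, K_i)$, I would consider the map sending $\big((s_1,s_2),(k_1,k_2)\big)$ to $\big((s_1,k_1),(s_2,k_2)\big)$. This is visibly a bijection from $(S_1 \times S_2) \times (K_1 \times K_2)$ onto $(S_1 \times K_1) \times (S_2 \times K_2)$, and by the coordinatewise description above it carries $X$ onto $X_1 \times X_2$. Hence $|X| = |X_1|\cdot|X_2|$.

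Finally I would handle the denominator: since
\[
\big|(S_1 \times S_2) \times (K_1 \times K_2)\big| = |S_1||S_2||K_1||K_2| = |S_1 \times K_1|\cdot|S_2 \times K_2|,
\]
dividing $|X|=|X_1||X_2|$ by this product and regrouping the factors gives
\[
{\Pr}_{(r_1,r_2)}(S_1 \times S_2, K_1 \times K_2) = \frac{|X_1|}{|S_1 \times K_1|}\cdot\frac{|X_2|}{|S_2 \times K_2|} = {\Pr}_{r_1}(S_1, K_1)\,{\Pr}_{r_2}(S_2, K_2),
\]
which is the claim. There is no serious obstacle here; the only point requiring care is keeping the indexing straight when reordering the four coordinates $(s_1,s_2,k_1,k_2)$ as one passes between $(S_1 \times S_2)\times(K_1 \times K_2)$ and $(S_1 \times K_1)\times(S_2 \times K_2)$, and this is precisely what the bijection in the second step records.
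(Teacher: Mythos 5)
Your proposal is correct and follows essentially the same route as the paper: both proofs observe that the commutator in $R_1\times R_2$ is computed coordinatewise and then use the reindexing bijection between $X$ and $X_1\times X_2$ (the paper writes the same bijection in the opposite direction, from $X_1\times X_2$ onto $Y$) before dividing by $|S_1||S_2||K_1||K_2|$. No gaps.
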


\begin{proof}
Let $X_i = \{(s_i, k_i) \in S_i\times K_i : [s_i, k_i] = r_i\}$ for $i = 1, 2$ and $Y = \{((s_1, s_2), (k_1, k_2)) \in (S_1\times S_2) \times (K_1\times K_2) : [(s_1, s_2),(k_1, k_2)]= (r_1, r_2)\}$. Then $((s_1, k_1), (s_2, k_2)) \mapsto ((s_1, s_2),(k_1, k_2))$ defines a bijective map from $X_1 \times X_2$ to $Y$. Therefore, $|Y| = |X_1||X_2|$ and hence the result follows.
\end{proof}

Initially, it was challenging for us to derive a computing formula for ${\Pr}_r(S, K)$ since there is no analogous concept of conjugacy class and no analogous character theoretic results for rings. Finally, we are able to get a formula. The following two lemmas play important role in obtaining our computing formula for ${\Pr}_r(S, K)$.

\begin{lemma}\label{lemma1}
Let $K$ be any subring of $R$. If  $x \in R$ then
$
|[x, K]|=\frac {|K|}{|C_K(x)|}.
$
\end{lemma}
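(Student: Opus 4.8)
The plan is to realize $[x,K]$ as the homomorphic image of the additive group $(K,+)$ under a suitable map, and then apply the first isomorphism theorem for abelian groups. Concretely, I would fix $x \in R$ and define
\[
\phi \colon (K, +) \to (R, +), \qquad \phi(k) = [x, k] = xk - kx.
\]
The first step is to check that $\phi$ is a homomorphism of additive groups. This follows from the additivity of the commutator in its second argument: for $k_1, k_2 \in K$ we have $[x, k_1 + k_2] = x(k_1 + k_2) - (k_1 + k_2)x = (xk_1 - k_1 x) + (xk_2 - k_2 x) = [x, k_1] + [x, k_2]$, so $\phi(k_1 + k_2) = \phi(k_1) + \phi(k_2)$. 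No multiplicative structure is needed here, only distributivity in $R$.

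The second step is to identify the image and the kernel of $\phi$. By the remark already established in the excerpt, the additive subgroup $[x,K]$ coincides with $\{[x,k] : k \in K\}$, which is exactly $\operatorname{im}\phi$. For the kernel, $\phi(k) = 0$ means $xk - kx = 0$, i.e. $xk = kx$, so $\ker\phi = \{k \in K : kx = xk\} = C_K(x)$, which is a subring (hence an additive subgroup) of $K$ as noted earlier.

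The final step is to invoke the first isomorphism theorem, which gives $K/\ker\phi \cong \operatorname{im}\phi$, that is $K/C_K(x) \cong [x,K]$. Taking cardinalities and using finiteness of $K$ yields
\[
|[x,K]| = |\operatorname{im}\phi| = \frac{|K|}{|\ker\phi|} = \frac{|K|}{|C_K(x)|},
\]
as required. I do not anticipate any genuine obstacle in this argument; the only point requiring a moment's care is that $\operatorname{im}\phi$ is literally the additive subgroup $[x,K]$ rather than merely a generating set, and this is precisely what the preliminary observation $[x,K] = \{[x,k] : k \in K\}$ guarantees.
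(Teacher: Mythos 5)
Your proof is correct and is essentially the paper's argument: the paper directly asserts that $[x,k] \mapsto k + C_K(x)$ is an isomorphism from $[x,K]$ onto $K/C_K(x)$, which is exactly the inverse of the isomorphism you obtain from the first isomorphism theorem applied to $k \mapsto [x,k]$. Your version just spells out the homomorphism, kernel, and image checks that the paper leaves implicit.
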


\begin{proof}
Note that  $[x, k] \mapsto k + C_K(x)$ defines an isomorphism from $[x, K]$ to $\frac {K}{C_K(x)}$. Hence, the lemma follows.
\end{proof}

\begin{lemma}\label{lemma2}
Let $S$ and $K$ be two subrings of $R$ and $T_{s, r} = \{k\in K : [s, k] = r\}$ where $s\in S$ and $r\in R$. Then we have the followings %For $s\in S$ and $r\in R$ let us define $T_s=\{k\in K:[s,k]=r\}$.
\begin{enumerate}
\item If $T_{s, r}\neq \phi$ then $T_{s, r} = t + C_K(s)$ for some $t\in T_{s, r}$.
%\item $T_{s, 0} \ne \phi$.
\item  $T_{s, r} \ne \phi$ if and only if $r \in [s, K]$.
%\item If $r \ne 0$ and $s \in   Z(S, K)$ then $T_{s, r} = \phi$.
\end{enumerate}
\end{lemma}

\begin{proof}
Let $t \in T_{s, r}$ and  $p \in t + C_K(s)$. Then $[s, p] = r$ and so $p \in T_{s, r}$. Therefore,  $t + C_K(s) \subseteq T_{s, r}$. Again, if $k \in T_{s, r}$ then  $(k - t) \in C_K(s)$ and so $k \in t + C_K(s)$. Therefore, $t + C_K(s)\subseteq T_{s, r}$. Hence part (a) follows.

Part (b) follows from the fact that $y \in T_{s, r}$ if and only of $r \in [s, K]$.
\end{proof}

%Let us proceed towards finding a formula for computing $Pr_r(S,K)$ when $S$ is a division subring.
\noindent Now we state and prove our main result of this section.

\begin{theorem}\label{com-thm}
Let $S$  and $K$ be two subrings of $R$. Then
\[
{\Pr}_r(S,K) = \frac {1}{|S||K|}\underset{r\in [s, K]}{\underset{s\in S}{\sum}}|C_K(s)| = \frac {1}{|S|}\underset{r\in [s, K]}{\underset{s\in S}{\sum}}\frac{1}{|[s, K]|}.
\]
%\[
%{\Pr}_r(S,K) = \begin{cases}
%\frac {1}{|S||K|}\underset{rs^{-1}\in K(s)}{\underset{s\in S\setminus Z(S, K)}{\sum}}|C_K(s)|=\frac {1}{|S|}\underset{rs^{-1}\in K(s)}{\underset{s\in S\setminus Z(S, K)}{\sum}}\frac {1}{|K(s)|} & \text{ if } r \ne 0\\

%\frac{|Z(S, K)|}{|S|} + \frac{1}{|S|}{\underset{s\in S\setminus Z(S, K)}{\sum}}\frac {1}{|K(s)|} & \text{ if } r = 0.
%\end{cases}
%\]
\end{theorem}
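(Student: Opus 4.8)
The plan is to start directly from the definition in \eqref{mainformula}, which gives
\[
{\Pr}_r(S,K) = \frac{1}{|S||K|}\,\bigl|\{(s,k)\in S\times K : [s,k]=r\}\bigr|,
\]
and then to count the cardinality in the numerator by slicing the set over the first coordinate. For each fixed $s\in S$, the fibre $\{k\in K : [s,k]=r\}$ is exactly the set $T_{s,r}$ introduced in Lemma~\ref{lemma2}, so the numerator equals $\sum_{s\in S}|T_{s,r}|$.

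The heart of the argument is then a direct appeal to Lemma~\ref{lemma2}. Part~(b) tells me that $T_{s,r}$ is nonempty precisely when $r\in[s,K]$, so only those $s$ contribute to the sum and I may restrict the index of summation to $\{s\in S : r\in[s,K]\}$. For each such $s$, part~(a) identifies $T_{s,r}$ with a coset $t+C_K(s)$ of the subring $C_K(s)$, whence $|T_{s,r}|=|C_K(s)|$. Substituting this into the slicing identity yields
\[
{\Pr}_r(S,K) = \frac{1}{|S||K|}\underset{r\in [s, K]}{\underset{s\in S}{\sum}}|C_K(s)|,
\]
which is the first claimed equality.

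For the second equality I would invoke Lemma~\ref{lemma1}, which gives $|[s,K]| = |K|/|C_K(s)|$, equivalently $|C_K(s)| = |K|/|[s,K]|$, valid for each $s$ in the index set. Replacing $|C_K(s)|$ by $|K|/|[s,K]|$ in the sum lets the factor $|K|$ cancel against the $|K|$ in the denominator $|S||K|$, leaving
\[
\frac{1}{|S|}\underset{r\in [s, K]}{\underset{s\in S}{\sum}}\frac{1}{|[s, K]|},
\]
as required.

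I do not anticipate a genuine obstacle here: the result is essentially a bookkeeping computation once the two lemmas are in hand, and the only point demanding care is the correct handling of the summation range so that the empty fibres are excluded and the conditions $r\in[s,K]$ in the two sums match up. The whole proof is an orbit-counting/coset-partition argument, with Lemma~\ref{lemma2} supplying the coset structure of each fibre and Lemma~\ref{lemma1} converting centralizer sizes into commutator-set sizes.
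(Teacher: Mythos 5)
Your proof is correct and is essentially identical to the paper's own argument: the paper likewise decomposes $\{(s,k)\in S\times K : [s,k]=r\}$ as $\bigcup_{s\in S}(\{s\}\times T_{s,r})$, applies Lemma~\ref{lemma2} to get $|T_{s,r}|=|C_K(s)|$ exactly when $r\in[s,K]$ (and $0$ otherwise), and then invokes Lemma~\ref{lemma1} to convert $|C_K(s)|$ into $|K|/|[s,K]|$ for the second equality. No gaps and no meaningful difference in approach.
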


\begin{proof}
Note that $\{(s, k) \in S\times K : [s, k] = r\} = \underset{s\in S}{\cup}(\{s\}\times T_{s, r})$. Therefore, by \eqref{mainformula} and Lemma \ref{lemma2}, we have
\begin{equation} \label{comfor1}
|S||K|{\Pr}_r(S,K) = \underset{s \in S}{\sum} |T_{s, r}| = \underset{r\in [s, K]}{\underset{s\in S}{\sum}}|C_K(s)|.
\end{equation}
%If $r = 0$ then by Lemma \ref{lemma2} (part (a) and (b)) we have $|T_{s, r}| = |C_K(s)|$. Now, from \eqref{comfor1}, we have
%\[
%|S||K|{\Pr}_r(S,K) = |K||Z(S, K)| + {\underset{s\in S\setminus Z(S, K)}{\sum}}|C_K(s)|.
%\]
%Hence the result follows by Lemma \ref{lemma1}.

%If $r \ne 0$ then,  by Lemma \ref{lemma2}, \eqref{comfor1} becomes
%\[
%|S||K|{\Pr}_r(S,K) = \underset{rs^{-1}\in K(s)}{\underset{s\in S\setminus Z(S, K)}{\sum}}|C_K(s)|.
%\]
The second part follows  from \eqref{comfor1} and Lemma  \ref{lemma1}.
\end{proof}

Using Proposition \ref{symmetricity}, we get the following corollary of Theorem \ref{com-thm}.
\begin{corollary}\label{formula1}
Let $S$  and $K$ be two subrings of $R$. Then
\[
{\Pr}(K, S) = {\Pr}(S, K) = \frac {1}{|S||K|}\sum_{s\in S}|C_K(s)|  = \frac {1}{|S|}\sum_{s\in S}\frac{1}{|[s, K]|}.
\]
\end{corollary}
It is worth mentioning that Equation (2.1) of \cite{BMS} and \cite{jutireka} also follow from Corollary \ref{formula1}.

We conclude this section by the following two  lemmas.
\begin{lemma}\label{lemma02}
{\rm \cite[Lemma 2.12]{jutireka}} Let $H$ and $N$ be two subrings of a   non-commutative ring $R$ such that $N$ is an ideal of $R$ and $N \subseteq H$.
Then
\[
\frac{C_H(x) + N}{N} \subseteq C_{H/N}(x + N) \; \text{for all} \; x \in R,
\]
where $H/N$ is a factor ring. The equality holds if $N \cap [H, R] = \{0\}$.
\end{lemma}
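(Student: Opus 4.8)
The plan is to reduce both the inclusion and the equality to a single coset-level observation: an element $h + N \in H/N$ lies in $C_{H/N}(x + N)$ precisely when the additive commutator $[h, x] = hx - xh$ lands in $N$. So the first step I would take is to pin down this characterization. Since $N$ is an ideal of $R$ with $N \subseteq H \subseteq R$, the factor $R/N$ is a ring and $H/N$ is a subring of it; although $x$ need not lie in $H$, the coset $x + N$ is a perfectly good element of $R/N$. By definition $h + N \in C_{H/N}(x + N)$ means $(h + N)(x + N) = (x + N)(h + N)$ computed in $R/N$, and expanding this says exactly $hx - xh \in N$, i.e. $[h, x] \in N$.

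For the inclusion $\frac{C_H(x) + N}{N} \subseteq C_{H/N}(x + N)$, I would take a typical element of the left-hand side. Every coset in $\frac{C_H(x) + N}{N}$ has the form $c + N$ with $c \in C_H(x)$, since the $N$-part of $c + n$ is absorbed modulo $N$. As $c$ commutes with $x$ we have $[c, x] = 0 \in N$, so by the characterization $c + N$ centralizes $x + N$. This direction uses no hypothesis on $N$.

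For the reverse inclusion under the assumption $N \cap [H, R] = \{0\}$, I would start from an element $h + N \in C_{H/N}(x + N)$ with $h \in H$, so that $[h, x] \in N$. The key observation, and the only place the hypothesis is used, is that $[h, x]$ is by definition one of the generators of the additive subgroup $[H, R]$ (because $h \in H$ and $x \in R$), whence $[h, x] \in [H, R]$. Combining the two memberships gives $[h, x] \in N \cap [H, R] = \{0\}$, so $hx = xh$, that is $h \in C_H(x)$. Thus $h + N \in \frac{C_H(x) + N}{N}$, and together with the first inclusion this yields the claimed equality.

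The main obstacle, and really the only point requiring genuine care, is the correct reading of the centralizer $C_{H/N}(x + N)$: because $x$ ranges over all of $R$ while the factor ring is $H/N$, one must interpret $x + N$ as an element of the ambient ring $R/N$ and centralize within the subring $H/N$, rather than expecting $x$ itself to sit in $H$. Once the membership condition is correctly translated into the single commutator statement $[h, x] \in N$, the rest is a short computation, with the hypothesis $N \cap [H, R] = \{0\}$ doing precisely the work of upgrading $[h, x] \in N$ to $[h, x] = 0$.
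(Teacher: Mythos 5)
Your proof is correct and complete: reducing membership of $h+N$ in $C_{H/N}(x+N)$ to the single condition $[h,x]\in N$, deducing the inclusion from $[c,x]=0\in N$, and using $[h,x]\in N\cap[H,R]=\{0\}$ to upgrade to $h\in C_H(x)$ is exactly the natural argument, and your care in reading $x+N$ as an element of the ambient ring $R/N$ (with $N$ an ideal making the condition independent of the representative of $x+N$) is the one genuinely delicate point. The paper itself gives no proof to compare against --- it imports this as Lemma 2.12 of the cited earlier work --- so there is nothing further to reconcile.
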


\begin{lemma}\label{lemma002}
Let $S \subseteq K$ be two subrings of $R$. If $S$ is non-commutative then $\frac{S}{Z(S, K)}$ is not cyclic.
\end{lemma}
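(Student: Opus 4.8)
The plan is to prove the contrapositive: I will show that if $\frac{S}{Z(S,K)}$ is cyclic, then $S$ must be commutative, contradicting the hypothesis. Since the isomorphisms considered here are additive group isomorphisms, ``cyclic'' means cyclic as an additive group, so I would begin by assuming $\frac{S}{Z(S,K)} = \langle a + Z(S,K)\rangle$ for some $a \in S$. This means every element $s \in S$ can be written as $s = na + z$ for some $n \in \Z$ and some $z \in Z(S,K)$.

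The key observation driving the argument is that, because $S \subseteq K$, we have $Z(S,K) = S \cap Z(K)$; hence every element of $Z(S,K)$ commutes multiplicatively with every element of $K$, and in particular with $a$ (which lies in $S \subseteq K$) and with every other element of $Z(S,K)$. I would then take two arbitrary elements $s_1 = n_1 a + z_1$ and $s_2 = n_2 a + z_2$ of $S$, with $z_1, z_2 \in Z(S,K)$, and expand the products $s_1 s_2$ and $s_2 s_1$ using distributivity, remembering that the integer multiples $n_i a$ behave like repeated sums, so that $(n_1 a)(n_2 a) = n_1 n_2 a^2$ and $(n_i a)z_j = n_i(a z_j)$, and so on.

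Carrying out the expansion, the $n_1 n_2 a^2$ terms coincide, and the cross terms involving the central elements $z_j$ can be rearranged freely, since $a z_j = z_j a$ and $z_1 z_2 = z_2 z_1$ because the $z_j$ are central in $K$. After simplification, $s_1 s_2$ and $s_2 s_1$ reduce to the same expression, so that $[s_1, s_2] = 0$ for all $s_1, s_2 \in S$, i.e. $S$ is commutative. This contradicts the assumption that $S$ is non-commutative, completing the proof.

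I do not anticipate a serious obstacle here: the argument is the ring-theoretic analogue of the classical group fact that $G/Z(G)$ cyclic forces $G$ abelian. The only point requiring care is the bookkeeping with the integer multiples interacting with ring multiplication (bilinearity of the product over the additive group), together with the explicit use of $S \subseteq K$ to guarantee that the central parts $z_j$ of the chosen coset representatives commute both with $a$ and with one another.
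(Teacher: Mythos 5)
Your proof is correct. The paper actually states Lemma \ref{lemma002} without supplying any proof, and your argument — writing each element of $S$ as $na+z$ with $z\in Z(S,K)$, using $S\subseteq K$ to ensure the central parts commute with $a$ and with each other, and expanding by bilinearity — is exactly the standard ring analogue of the ``$G/Z(G)$ cyclic implies $G$ abelian'' argument that the authors evidently had in mind.
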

%\begin{proof}

%\end{proof}

\section{Some bounds and characterizations}
In this section, we derive some bounds for ${\Pr}_r(S,K)$ and characterizations of subrings $S$ and $K$ in terms of $\Pr(S, K)$. We begin with the following lower bounds.

\begin{proposition}
Let $S$  and $K$ be two subrings of $R$. If $r \ne 0$ then
\begin{enumerate}
\item ${\Pr}_r(S,K)\geq \frac{|Z(S,K)||Z(K,S)|}{|S||K|}$.
\item If $S \subseteq K$ then ${\Pr}_r(S, K)\geq \frac{2|Z(S, K)||Z(K, S)|}{|S||K|}$.
\item ${\Pr}_r(R)\geq \frac{3}{|R : Z(R)|^2}$.
\end{enumerate}
\end{proposition}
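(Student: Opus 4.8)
The plan is to start from the computing formula of Theorem~\ref{com-thm}, which gives
\[
|S||K|\,{\Pr}_r(S,K) = \sum_{\substack{s\in S\\ r\in[s,K]}}|C_K(s)|,
\]
and to bound the right-hand side below by exhibiting enough elements $s\in S$ with $r\in[s,K]$ whose centralizers are large. Two elementary facts drive everything. First, $Z(K,S)\subseteq C_K(s)$ for every $s\in S$, so each summand is at least $|Z(K,S)|$. Second, for $z\in Z(S,K)$ we have $[s+z,k]=[s,k]$ for all $k\in K$, so both the condition $r\in[s,K]$ and the value $C_K(s)$ are constant on the coset $s+Z(S,K)$. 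Hence the index set of the sum is a union of cosets of $Z(S,K)$, and each such coset contributes at least $|Z(S,K)|\,|Z(K,S)|$. The whole proposition then reduces to counting how many distinct such cosets I can guarantee.

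For part~(a): since $r$ lies in $\{[s,k]\}$, fix $s_0\in S,\ k_0\in K$ with $[s_0,k_0]=r$, so $r\in[s_0,K]$ and the entire coset $s_0+Z(S,K)$ belongs to the index set. By the remarks above this one coset already contributes at least $|Z(S,K)|\,|Z(K,S)|$, and dividing by $|S||K|$ gives (a).

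For part~(b), with $S\subseteq K$: because $[s_0,K]$ is an additive subgroup containing $r$, it also contains $-r$, so $r\in[-s_0,K]$ and the coset $-s_0+Z(S,K)$ is in the index set too. If $2s_0\notin Z(S,K)$, then $s_0+Z(S,K)$ and $-s_0+Z(S,K)$ are \emph{distinct} cosets, each contributing at least $|Z(S,K)|\,|Z(K,S)|$, which yields the factor~$2$. The delicate case, and the step I expect to be the main obstacle, is $2s_0\in Z(S,K)$ (equivalently $2r=[2s_0,k_0]=0$), where the two cosets collapse into one. Here I would instead try to double the centralizer: since $S\subseteq K$ we have $s_0\in C_K(s_0)$, so provided $s_0\notin Z(K,S)$ we get $Z(K,S)\subsetneq C_K(s_0)$ and hence $|C_K(s_0)|\ge 2|Z(K,S)|$, again producing the factor~$2$ from the single coset. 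Verifying that one of these two mechanisms is always available when $2r=0$ (i.e.\ that a representative outside $Z(K,S)$ can be found) is the genuinely delicate point of the whole proposition.

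For part~(c), where $S=K=R$ and $Z(S,K)=Z(K,S)=Z(R)$: the extra leverage is that now $k_0\in R$ as well, so $k_0,-k_0$ join $s_0,-s_0$ in the index set (using $r=-[k_0,s_0]\in[k_0,R]$). Writing $a=s_0+Z(R)$ and $b=k_0+Z(R)$ in $R/Z(R)$, I first observe $a\neq\pm b$, since $s_0\mp k_0\in Z(R)$ would force $r=[s_0,k_0]=0$; consequently $\{a,-a\}$ and $\{b,-b\}$ are disjoint. If $2a\neq0$ or $2b\neq0$, one of these pairs has two elements and I already have at least three distinct cosets. If instead $2a=2b=0$, then $s_0+k_0$ also lies in the index set because $[s_0+k_0,k_0]=r$, and its coset $a+b$ is distinct from $a$ and $b$; so once more there are at least three distinct cosets. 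Since each contributes at least $|Z(R)|^2$, we obtain $|R|^2\,{\Pr}_r(R)\ge 3|Z(R)|^2$, which is exactly the bound in~(c).
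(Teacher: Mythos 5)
Your parts (a) and (c) are correct. Part (a) is the paper's own argument read through the computing formula of Theorem~\ref{com-thm}: the paper instead exhibits the coset $(s,k)+\left(Z(S,K)\times Z(K,S)\right)$ inside $\{(x,y)\in S\times K:[x,y]=r\}$, which is the same count. For part (c) the paper avoids your case analysis entirely by taking the three pairs $(s,k)$, $(s+k,k)$, $(s,k+s)$, whose cosets modulo $Z(R)\times Z(R)$ are pairwise disjoint simply because neither $s$ nor $k$ is central; the analogous shortcut in your framework is that the cosets $a$, $b$ and $a+b$ are always pairwise distinct and all admissible, so the dichotomy on $2a$, $2b$ is unnecessary (though what you wrote is valid).

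The gap you flag in part (b) is genuine, and it cannot be closed, because statement (b) is false as written. Both of your mechanisms need a representative $s_0$ with either $2s_0\notin Z(S,K)$ or $s_0\notin Z(K,S)$, and when $S$ is commutative and of characteristic $2$ neither is available, since then every element of $S$ lies in $Z(K,S)$. Concretely, let $R=K$ be the ring of strictly upper triangular $3\times 3$ matrices over $\mathbb{F}_2$, let $S=\{0,E_{12},E_{13},E_{12}+E_{13}\}$ (a commutative subring: all products inside $S$ vanish), and let $r=E_{13}=[E_{12},E_{23}]\neq 0$. Writing $s=xE_{12}+zE_{13}$ and $k=x'E_{12}+y'E_{23}+z'E_{13}$ one gets $[s,k]=xy'E_{13}$, so exactly $2\cdot 4=8$ of the $32$ pairs realize $r$ and ${\Pr}_r(S,K)=\tfrac{1}{4}$; on the other hand $Z(S,K)=\{0,E_{13}\}$ and $Z(K,S)=S$, so the bound in (b) equals $\tfrac{2\cdot 2\cdot 4}{4\cdot 8}=\tfrac{1}{2}$. (Part (a) holds here with equality, which also shows the factor $2$ cannot be recovered by any rearrangement.) The paper's own proof of (b) fails at exactly the point you identified: the sets $(s,k)+\left(Z(S,K)\times Z(K,S)\right)$ and $(s,k+s)+\left(Z(S,K)\times Z(K,S)\right)$ are disjoint only when $s\notin Z(K,S)$, which the hypotheses do not guarantee. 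So for (b) the right move is not to hunt for the missing verification but to add a hypothesis — e.g.\ that some pair $(s,k)$ with $[s,k]=r$ has $s\notin Z(K,S)$ — under which your second mechanism alone already yields the factor $2$.
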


\begin{proof}
Since $r \ne 0$ we have the set ${\mathcal{C}} := \{(x, y) \in S \times K : [x, y] = r\}$ is non empty. Let $(s, k) \in {\mathcal{C}}$ then $(s, k) \notin Z(S, K)\times Z(K, S)$, otherwise $[s, k] = 0$. Now, for part (a) it is sufficient to note that the coset $(s, k) + \left(Z(S, K)\times Z(K, S)\right)$ is a subset of ${\mathcal{C}}$. If $S \subseteq K$ then $(s, k) + \left(Z(S, K)\times Z(K, S)\right)$ and $(s, k + s) + \left(Z(S, K)\times Z(K, S)\right)$ are two disjoint subsets of ${\mathcal{C}}$. Therefore, part (b) follows. For part (c), we consider $S = K = R$ and notice that $(s, k) + \left(Z(R)\times Z(R)\right)$, $(s + k, k) + \left(Z(R)\times Z(R)\right)$ and $(s, k + s) + \left(Z(R)\times Z(R)\right)$ are three disjoint subsets of $\{(x, y) \in R \times R : [x, y] = r\}$.
\end{proof}

\begin{proposition}\label{ub02}
Let $S$  and $K$ be two subrings of $R$. Then ${\Pr}_r(S, K) \leq {\Pr}(S, K)$ with equality if and only if $r = 0$. Further, if $S \subseteq K$ then ${\Pr}(S, K) \leq |K : S|\Pr(K)$ with equality if and only if $S = K$.
\end{proposition}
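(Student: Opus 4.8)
The plan is to obtain both inequalities directly from the computing formula in Theorem \ref{com-thm} and its Corollary \ref{formula1}, exploiting the fact that in each case one probability is a restricted sub-sum of the other taken over strictly positive terms. The key observation throughout is that $0 \in C_K(x)$ for every $x$, so $|C_K(x)| \geq 1$; this positivity is what converts ``dropping a term'' into a strict decrease and thereby pins down the equality conditions.

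For the first inequality I would write, using Theorem \ref{com-thm} and Corollary \ref{formula1},
\[
{\Pr}_r(S,K) = \frac{1}{|S||K|}\underset{r\in [s, K]}{\underset{s\in S}{\sum}}|C_K(s)| \qquad\text{and}\qquad {\Pr}(S,K) = \frac{1}{|S||K|}\sum_{s\in S}|C_K(s)|,
\]
so that the first sum is obtained from the second by retaining only those $s \in S$ with $r \in [s, K]$. Since every summand is positive, the restricted sum is at most the full one, giving ${\Pr}_r(S,K) \leq {\Pr}(S,K)$; explicitly, the difference is $\frac{1}{|S||K|}\underset{r\notin [s, K]}{\underset{s\in S}{\sum}}|C_K(s)|$. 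As each omitted term is at least $1$, this vanishes if and only if $r \in [s,K]$ for every $s \in S$. Taking $s = 0$ and using $[0,K] = \{0\}$ forces $r = 0$; conversely, if $r = 0$ then $0 = [s,0] \in [s,K]$ for all $s$, so no term is discarded and equality holds.

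For the second inequality, assuming $S \subseteq K$, I would apply Corollary \ref{formula1} to the pair $(K,K)$ to get $\Pr(K) = \frac{1}{|K|^2}\sum_{k\in K}|C_K(k)|$, and then use $|K:S| = |K|/|S|$ to rewrite
\[
|K : S|\,\Pr(K) = \frac{1}{|S||K|}\sum_{k\in K}|C_K(k)|,
\]
while Corollary \ref{formula1} gives ${\Pr}(S,K) = \frac{1}{|S||K|}\sum_{s\in S}|C_K(s)|$. Because $S \subseteq K$, the index set of the second sum is contained in that of the first, and all summands are positive, so ${\Pr}(S,K) \leq |K:S|\Pr(K)$. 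The difference equals $\frac{1}{|S||K|}\sum_{k\in K\setminus S}|C_K(k)|$, a sum of terms each at least $1$, hence it is zero precisely when $K \setminus S = \emptyset$, that is, when $S = K$.

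I do not expect a genuine obstacle, since both claims reduce to comparing a sum with one of its sub-sums. The only point demanding care is the equality analysis: one must invoke the nonemptiness of the centralizers (each contains $0$) to ensure every omitted summand is strictly positive, which is exactly what makes each inequality strict as soon as a single index is genuinely dropped.
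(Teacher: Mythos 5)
Your proof is correct and follows essentially the same route as the paper: both inequalities are obtained by viewing the smaller quantity as a sub-sum of the larger via Theorem \ref{com-thm} and Corollary \ref{formula1}, with equality pinned down by the positivity of each $|C_K(\cdot)|$. Your explicit justification of the equality case for the first inequality (taking $s=0$ to force $r=0$) is a detail the paper leaves implicit, but the argument is the same.
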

\begin{proof}
By Theorem \ref{com-thm} and Corollary \ref{formula1}, we have
\[
{\Pr}_r(S, K) = \frac {1}{|S||K|}\underset{r \in [s, K]}{\underset{s\in S}{\sum}}|C_K(s)|
 \leq \frac {1}{|S||K|}\underset{s\in S}{\sum}|C_K(s)|
=  \Pr(S, K).
\]
The equality holds if and only if $r = 0$.

If $S \subseteq K$ then we have
\[
\Pr(S, K) = \frac {1}{|S||K|}\underset{s\in S}{\sum}|C_K(s)| \leq \frac {1}{|S||K|}\underset{s\in K}{\sum}|C_K(s)| = |K : S|\Pr(K).
\]
The equality holds if and only if $S = K$.
\end{proof}

%\begin{corollary}
%If $S \subseteq K$ are two subrings of $R$ such that $S$ is division ring  and $r \in \{[s, k] : s \in S, k \in R\}$ then
%\[
%{\Pr}_r(S, K)\leq {\Pr}(S, K) < |K : S|\Pr(K).
%\]
%\end{corollary}
%\begin{proof}

%If $r\neq 0$ then by Theorem \ref{com-thm} and Corollary \ref{formula1}, we have
%\begin{align*}
%{\Pr}_r(S, K) = &\frac {1}{|S||K|}\underset{rs^{-1}\in K(s)}{\underset{s\in S\setminus Z(S, K)}{\sum}}|C_K(s)|\\
%\leq &\frac {1}{|S||K|}\underset{s\in S}{\sum}|C_K(s)| = \Pr(S, K).
%\end{align*}
%Further, $\Pr(S, K) = \frac {1}{|S||K|}\underset{s\in S}{\sum}|C_K(s)| < \frac {1}{|S||K|}\underset{s\in K}{\sum}|C_K(s)| = |K : S|\Pr(K)$.

%If $r = 0$ then equality holds.
%\end{proof}

\begin{proposition}
Let $S$  and $K$ be two subrings of $R$. If $p$ is the smallest prime dividing $|R|$ and $r \ne 0$ then
\[
{\Pr}_r(S, K)\leq \frac {|S| - |Z(S, K)|}{p|S|} < \frac {1}{p}.
\]
\end{proposition}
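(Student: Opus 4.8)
The plan is to start from the second form of the computing formula in Theorem \ref{com-thm}, namely ${\Pr}_r(S,K) = \frac{1}{|S|}\sum_{s\in S,\, r\in[s,K]}\frac{1}{|[s,K]|}$, and to bound separately the size of each summand and the number of summands that actually appear.

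First I would exploit the hypothesis $r \ne 0$ to cut down the index set of the sum. If $s \in Z(S,K)$ then $sk = ks$ for every $k \in K$, so $[s,K] = \{0\}$; since $r \ne 0$ the condition $r \in [s,K]$ cannot hold. Consequently every $s$ contributing a nonzero term lies in $S \setminus Z(S,K)$, and the number of such terms is at most $|S| - |Z(S,K)|$.

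Next I would bound each term $\frac{1}{|[s,K]|}$. For any $s$ contributing to the sum we have $[s,K] \ne \{0\}$, and $[s,K]$ is an additive subgroup of the finite group $(R,+)$. By Lagrange's theorem $|[s,K]|$ divides $|R|$, and being strictly greater than $1$ it must be at least $p$, the smallest prime dividing $|R|$. Hence $\frac{1}{|[s,K]|} \le \frac{1}{p}$ for each such $s$. Combining the termwise estimate with the bound on the number of terms gives
\[
{\Pr}_r(S,K) \le \frac{1}{|S|}\cdot\bigl(|S| - |Z(S,K)|\bigr)\cdot\frac{1}{p} = \frac{|S| - |Z(S,K)|}{p|S|},
\]
which is the first asserted inequality.

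For the strict inequality I would simply note that $0 \in Z(S,K)$, so $|Z(S,K)| \ge 1 > 0$, whence $|S| - |Z(S,K)| < |S|$ and therefore $\frac{|S| - |Z(S,K)|}{p|S|} < \frac{1}{p}$. I do not expect a genuine obstacle in this argument; the single point that requires care is the divisibility step, which forces each nontrivial $|[s,K]|$ to be at least $p$, and this hinges precisely on $[s,K]$ being an honest additive subgroup of $(R,+)$ so that Lagrange's theorem applies.
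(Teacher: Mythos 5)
Your proposal is correct and follows essentially the same route as the paper: both restrict the sum in Theorem \ref{com-thm} to $s \in S \setminus Z(S,K)$ using $r \ne 0$, and both show each surviving term satisfies $|[s,K]| \ge p$ (the paper via $|[s,K]| = |K:C_K(s)| > 1$ from Lemma \ref{lemma1}, you directly via Lagrange on the additive subgroup $[s,K]$ of $(R,+)$ --- a negligible difference). Your explicit observation that $0 \in Z(S,K)$ gives the strict inequality is a detail the paper leaves implicit.
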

\begin{proof}
Since  $r \ne 0$ we have $S \ne Z(S, K)$. If $s \in Z(S, K)$ then $r \notin [s, K]$. If $s \in S \setminus Z(S, K)$ then $C_K(s) \ne K$. Therefore, by Lemma \ref{lemma1}, we have  $|[s, K]| = |K : C_K(s)| > 1$. Since   $p$ is the smallest prime dividing $|R|$ we have $|[s, K]| \geq p$.  Hence the result follows from Theorem \ref{com-thm}.
%Now for $r\neq 0$,
%\begin{align*}
%Pr_r(S,K)=&\frac {1}{|S||K|}\underset{rs^{-1}\in K(s)}{\underset{s\in S\setminus \{0\}}{\sum}}\frac {|K|}{|K(s)|}\\
%\leq &\frac {1}{|S|}\underset{rs^{-1}\in K(s)}{\underset{s\in S\setminus\{0\}}{\sum}}\frac {1}{p}\\
%=&\frac {1}{p|S|}\underset{rs^{-1}\in K(s)}{\underset{s\in S\setminus Z(S,K)}{\sum}}1\\
%\leq & \frac {1}{p|S|}\underset{s\in S\setminus Z(S,K)}{\sum}1\\
%=&\frac {1}{p|S|}(|S|-|Z(S,K)|)\\
%<&\frac {1}{p}.
%\end{align*}
%Hence the result.
\end{proof}

\begin{proposition}\label{ub03}
If $S_1\subseteq S_2$ and $K_1\subseteq K_2$ are subrings of $R$. Then
\[
{\Pr}_r(S_1, K_1) \leq |S_2 : S_1||K_2 : K_1|{\Pr}_r(S_2, K_2).
\]
\end{proposition}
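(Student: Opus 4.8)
The plan is to work directly from the defining formula \eqref{mainformula} rather than the computing formula of Theorem \ref{com-thm}, since the statement is really a counting inequality in disguise. For $i = 1, 2$ I would set
\[
X_i = \{(s, k) \in S_i \times K_i : [s, k] = r\},
\]
so that by \eqref{mainformula} we have ${\Pr}_r(S_i, K_i) = |X_i| / (|S_i||K_i|)$. The whole argument then reduces to comparing $|X_1|$ with $|X_2|$ and unwinding the two indices.

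The key observation is the set inclusion $X_1 \subseteq X_2$. Indeed, since $S_1 \subseteq S_2$ and $K_1 \subseteq K_2$, any pair $(s, k) \in S_1 \times K_1$ also lies in $S_2 \times K_2$; if in addition $[s, k] = r$, then $(s, k) \in X_2$. Hence every element of $X_1$ is an element of $X_2$, which gives $|X_1| \leq |X_2|$.

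With this in hand the inequality is a short manipulation of cardinalities. Writing $|S_2 : S_1| = |S_2|/|S_1|$ and $|K_2 : K_1| = |K_2|/|K_1|$, I would compute
\[
{\Pr}_r(S_1, K_1) = \frac{|X_1|}{|S_1||K_1|} \leq \frac{|X_2|}{|S_1||K_1|} = \frac{|S_2||K_2|}{|S_1||K_1|} \cdot \frac{|X_2|}{|S_2||K_2|},
\]
and the rightmost expression is precisely $|S_2 : S_1||K_2 : K_1|{\Pr}_r(S_2, K_2)$, completing the proof.

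There is no serious obstacle here: the only thing to verify carefully is that the inclusions $S_1 \subseteq S_2$ and $K_1 \subseteq K_2$ genuinely force $X_1 \subseteq X_2$ (which is immediate, since the commutator condition $[s, k] = r$ is identical in both ambient products and does not depend on which subrings $s$ and $k$ are viewed as belonging to). The finiteness of $R$ guarantees all cardinalities are finite and the indices are well-defined, so the division causes no difficulty.
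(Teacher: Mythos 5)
Your proof is correct, and it takes a more elementary route than the paper. The paper first invokes the computing formula of Theorem \ref{com-thm} to write $|S_i||K_i|{\Pr}_r(S_i,K_i)$ as $\sum_{s\in S_i,\, r\in[s,K_i]}|C_{K_i}(s)|$ and then compares the two sums, using that the index set only grows (as $[s,K_1]\subseteq[s,K_2]$ for $s\in S_1$) and that each term only grows (as $C_{K_1}(s)\subseteq C_{K_2}(s)$). You instead observe directly from the definition \eqref{mainformula} that the underlying solution sets satisfy $X_1\subseteq X_2$, so $|X_1|\leq|X_2|$, and the inequality follows by rescaling. The two arguments count the same thing --- the paper's sum is exactly $|X_i|$, decomposed fibrewise over $s$ via Lemma \ref{lemma2} --- but yours bypasses Theorem \ref{com-thm} and Lemmas \ref{lemma1}--\ref{lemma2} entirely, which makes it shorter and self-contained; the paper's version has the mild advantage that the term-by-term comparison makes the equality conditions stated after Proposition \ref{ub03} immediately visible. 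One point worth making explicit in your write-up: $|S_2:S_1|=|S_2|/|S_1|$ is Lagrange's theorem applied to the additive group $(S_2,+)$ and its subgroup $(S_1,+)$, which is the sense of index the paper uses.
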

\begin{proof}
By Theorem \ref{com-thm}, we have
\begin{align*}
|S_1||K_1|{\Pr}_r(S_1, K_1) = &\underset{r  \in [s, K_1]}{\underset{s \in S_1}{\sum}}|C_{K_1}(s)|\\
\leq &\underset{r \in [s, K_2]}{\underset{s\in S_2}{\sum}}|C_{K_2}(s)| = |S_2||K_2|{\Pr}_r(S_2,K_2).
\end{align*}
Hence the result follows.
%Clearly, the equality holds if and only if
%\begin{align*}
%rs^{-1}\notin &K_2(s);~\textup{for all}~ s\in S_2\setminus S_1,\\
%rs^{-1}\notin &K_2(s)\setminus K_1(s);~\textup{for all}~ s\in S_1\\
%\textup{and}~C_{K_1}(s)=&C_{K_2}(s);~\textup {for all}~ s\in S_1 ~\textup{with}~rs^{-1}\in K_1(s).
%\end{align*}
%Similarly we can prove the inequality for $r=0$ and the condition of equality reduces to $S_1=S_2$ and $K_1=K_2$.
\end{proof}
\noindent Note that  equality holds in Proposition \ref{ub03} if and only if
$r \notin  [s, K_2]$   for all   $s\in S_2 \setminus S_1$,   $r \notin  [s, K_2] \setminus [s, K_1]$   for all $s\in S_1$
   and    $C_{K_1}(s) =   C_{K_2}(s$)   for all $s\in S_1$   with $r \in [s, K_1]$.
If $r = 0$ then the condition of equality reduces to $S_1 = S_2$ and $K_1 = K_2$.

\begin{corollary}
If $S_1\subseteq S_2$  are two subrings of $R$. Then
\[
{\Pr}_r(S_1, R) \leq |S_2 : S_1|{\Pr}(S_2, R).
\]
The equality holds if and only if $r = 0$ and $S_1 = S_2$.
\end{corollary}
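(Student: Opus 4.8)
The plan is to obtain this statement as a direct consequence of the two preceding propositions. First I would apply Proposition \ref{ub03} with the specialization $K_1 = K_2 = R$. Since $|R : R| = 1$, that proposition immediately yields
\[
{\Pr}_r(S_1, R) \leq |S_2 : S_1|{\Pr}_r(S_2, R).
\]
Next I would invoke the first part of Proposition \ref{ub02}, applied to the pair $S_2 \subseteq R$, to get ${\Pr}_r(S_2, R) \leq {\Pr}(S_2, R)$. Multiplying this through by the positive integer $|S_2 : S_1|$ and chaining the two inequalities produces the desired bound
\[
{\Pr}_r(S_1, R) \leq |S_2 : S_1|{\Pr}(S_2, R).
\]

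For the equality characterization, the key observation is that equality in the corollary forces equality simultaneously in each of the two inequalities used above. The second inequality, ${\Pr}_r(S_2, R) \leq {\Pr}(S_2, R)$, is an equality precisely when $r = 0$, by Proposition \ref{ub02}. Granting $r = 0$, the first inequality becomes exactly the $r = 0$ instance of Proposition \ref{ub03} (with $K_1 = K_2 = R$), whose equality condition, as recorded in the remark immediately following that proposition, reduces to $S_1 = S_2$. Conversely, when $r = 0$ and $S_1 = S_2$ both sides collapse to $\Pr(S_1, R)$ and $|S_2 : S_1| = 1$, so equality genuinely holds.

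The inequality part is routine once the two earlier propositions are in hand; the only point demanding care is the equality analysis. There one must treat the two inequalities in the chain as independent constraints and extract the combined requirement that $r = 0$ \emph{and} $S_1 = S_2$, being careful not to conflate the two conditions or to overlook that the $r=0$ hypothesis is what makes the equality criterion of Proposition \ref{ub03} collapse to the single equation $S_1 = S_2$.
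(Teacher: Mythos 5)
Your proof is correct and follows essentially the same route as the paper: specialize Proposition \ref{ub03} to $K_1=K_2=R$ and then apply Proposition \ref{ub02} to pass from ${\Pr}_r(S_2,R)$ to ${\Pr}(S_2,R)$. Your equality analysis, combining the equality condition of Proposition \ref{ub02} with the remark following Proposition \ref{ub03}, is exactly what the paper's terse ``Hence the result follows'' is implicitly relying on.
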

\begin{proof}
Putting $K_1 = K_2 = R$ in Proposition \ref{ub03}, we get
\[
{\Pr}_r(S_1, R) \leq |S_2 : S_1|{\Pr}_r(S_2, R).
\]
Hence the result follows from Proposition \ref{ub02}.
\end{proof}

\begin{proposition}\label{boundS_1K_1}
Let $S, K_1$ and $K_2$ be three subrings of $R$. If  $K_1\subseteq K_2$  then
\[
{\Pr}(S, K_1) \geq {\Pr}(S, K_2)\geq \frac {1}{|K_2 : K_1|}\left(\Pr(S, K_1) + \frac {|K_2| - |K_1|}{|S||K_1|}\right).
\]
The first equality holds if and only if $[s, K_1] = [s, K_2]$ for all $s \in S$ and the second  equality holds if and only if $C_S(k) = \{0\}$ for all $k \in K_2\setminus K_1$.
\end{proposition}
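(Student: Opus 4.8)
The plan is to prove both inequalities in Proposition~\ref{boundS_1K_1} by applying the computing formula from Corollary~\ref{formula1}, written in the form
\[
{\Pr}(S, K) = \frac{1}{|S|}\sum_{s \in S}\frac{1}{|[s, K]|}.
\]
First I would establish the leftmost inequality ${\Pr}(S, K_1) \geq {\Pr}(S, K_2)$. The key observation is that since $K_1 \subseteq K_2$, for each fixed $s \in S$ we have $[s, K_1] \subseteq [s, K_2]$, and hence $|[s, K_1]| \leq |[s, K_2]|$. Taking reciprocals reverses the inequality term by term, and summing over $s \in S$ and dividing by $|S|$ yields ${\Pr}(S, K_1) \geq {\Pr}(S, K_2)$. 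The equality condition is immediate: equality holds throughout if and only if $|[s, K_1]| = |[s, K_2]|$ for every $s \in S$, which combined with the inclusion $[s, K_1] \subseteq [s, K_2]$ is equivalent to $[s, K_1] = [s, K_2]$ for all $s \in S$.

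For the second (right-hand) inequality I would instead use the form ${\Pr}(S, K) = \frac{1}{|S||K|}\sum_{s \in S}|C_K(s)|$. The goal is to compare $\sum_{s \in S}|C_{K_2}(s)|$ against an expression involving $\sum_{s \in S}|C_{K_1}(s)|$. The natural relation is $C_{K_1}(s) \subseteq C_{K_2}(s)$ for each $s$, but we need a lower bound on $|C_{K_2}(s)|$, so the inclusion alone is not quite enough—I would want to split each centralizer size. The cleanest route is to write $|C_{K_2}(s)| = |C_{K_1}(s)| + |C_{K_2}(s) \setminus C_{K_1}(s)|$ and bound the extra term. Rearranging the desired inequality
\[
{\Pr}(S, K_2) \geq \frac{1}{|K_2 : K_1|}\left(\Pr(S, K_1) + \frac{|K_2| - |K_1|}{|S||K_1|}\right)
\]
after multiplying through by $|S||K_2|$ reduces it to showing $\sum_{s \in S}|C_{K_2}(s)| \geq \sum_{s \in S}|C_{K_1}(s)| + (|K_2| - |K_1|)$, i.e. $\sum_{s \in S}\bigl(|C_{K_2}(s)| - |C_{K_1}(s)|\bigr) \geq |K_2| - |K_1|$.

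The heart of the argument is therefore a counting estimate for the left-hand sum. I would interpret $|C_{K_2}(s)| - |C_{K_1}(s)|$ as the number of elements $k \in K_2 \setminus K_1$ that commute with $s$, and sum this over all $s \in S$; equivalently, I would count pairs $(s, k)$ with $s \in S$, $k \in K_2 \setminus K_1$, and $[s, k] = 0$. Reversing the order of summation, this equals $\sum_{k \in K_2 \setminus K_1}|C_S(k)|$. Since $0 \in C_S(k)$ always, each summand is at least $1$, and there are exactly $|K_2| - |K_1|$ terms, giving the lower bound $|K_2| - |K_1|$ as required. This bookkeeping step—correctly identifying the double-counted quantity and reindexing—is the main obstacle, since one must be careful that the set $K_2 \setminus K_1$ is not a subring and so the counting is over a coset-difference rather than a substructure. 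Finally, equality holds in this chain precisely when $|C_S(k)| = 1$ for every $k \in K_2 \setminus K_1$, that is, when $C_S(k) = \{0\}$ for all such $k$, which is exactly the stated equality condition.
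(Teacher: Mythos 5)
Your proof is correct and follows essentially the same route as the paper: the first inequality via $[s,K_1]\subseteq[s,K_2]$ in the reciprocal-sum form of Corollary \ref{formula1}, and the second via the bound $\sum_{k\in K_2\setminus K_1}|C_S(k)|\geq |K_2|-|K_1|$ with the same equality conditions. The only cosmetic difference is that the paper invokes the symmetry $\Pr(S,K_2)=\Pr(K_2,S)$ to sum over $K_2$ directly, whereas you reach the identical sum by reversing the order of summation in a double count.
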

\begin{proof}
Since $[s, K_1] \subseteq [s, K_2]$ for all $s \in S$, by Corollary \ref{formula1}, we have
\[
{\Pr}(S, K_1) = \frac{1}{|S|} \sum_{s \in S}\frac{1}{|[s, K_1]|} \geq \frac{1}{|H|} \sum_{s \in S}\frac{1}{|[s, K_2]|} = {\Pr}(S, K_2)
\]
with equality  if and only if $[s, K_1] = [s, K_2]$ for all $s \in S$.

By  Corollary \ref{formula1}, we also have
\begin{align}\label{inequality-1}
\Pr(S, K_2) =  \Pr(K_2, S)\nonumber
= & \frac {1}{|S||K_2|}\underset{k \in K_2}{\sum}|C_S(k)|\nonumber\\
= &\frac {\Pr(S,K_1)}{|K_2 : K_1|}  + \frac {1}{|S||K_2|}\underset{k \in K_2\setminus K_1}{\sum}|C_S(k)|.
%\\
%=&\frac {|K_1|}{|K_2|}Pr(S,K_1)+\frac {1}{|S||K_2|}\underset{k\in K_2\setminus K_1}{\sum}|%C_S(k)|\\
%\geq &\frac {|K_1|}{|K_2|}Pr(S,K_1)+\frac {|K_1|}{|K_2|}\frac {1}{|S||K_2|}\underset{k\in %K_2\setminus K_1}{\sum}|C_S(k)|\\
%=&\frac {1}{|K_2:K_1|}\left(Pr(S,K_1)+\frac {|K_2|-|K_1|}{|S||K_1|}\right).
\end{align}
We have $|C_S(k)| \geq 1$ for all $k \in K_2\setminus K_1$. Therefore,
\begin{equation}\label{inequality-2}
\underset{k \in K_2\setminus K_1}{\sum}|C_S(k)| \geq |K_2| - |K_1|
\end{equation}
the equality holds if and only if $|C_S(k)| = 1$ for all $k\in K_2\setminus K_1$. Hence the result follows from \eqref{inequality-1} and \eqref{inequality-2}.
%Clearly the equality holds if and only if $C_S(k)=\{1\}$ for all $k\in K_2\setminus K_1$.
\end{proof}
\noindent It is worth mentioning that the second part of  \cite[Theorem 2.4]{jutireka} follows from the first part of Proposition \ref{boundS_1K_1}.
\begin{proposition}
Let $S \subseteq K$ be two subrings of $R$. If $p$ is the smallest prime dividing  $|R|$  and $|S : Z(S, K)| = p^n$   then $\Pr(S, K)\leq \frac {p^n + p - 1}{p^{n + 1}}$. Moreover, if $S = K$ then we have $\Pr(S, K) \geq  \frac {p^n + p^{n - 1} - 1}{p^{2n - 1}}$.
\end{proposition}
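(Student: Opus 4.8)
The plan is to run everything through the computing formula of Corollary~\ref{formula1}, namely $\Pr(S,K)=\frac{1}{|S|}\sum_{s\in S}\frac{1}{|[s,K]|}$, and to split the sum over $S$ according to whether $s\in Z(S,K)$ or $s\in S\setminus Z(S,K)$. The entire argument reduces to controlling the size of $|[s,K]|$ in these two regimes, and both bounds come out by substituting $|S|=p^n|Z(S,K)|$ and simplifying.

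For the upper bound I would first note that $s\in Z(S,K)$ forces $[s,K]=\{0\}$, so each of the $|Z(S,K)|$ central terms contributes $1$ to the sum. For $s\in S\setminus Z(S,K)$ we have $C_K(s)\neq K$, whence by Lemma~\ref{lemma1} the index $|[s,K]|=|K:C_K(s)|$ is strictly greater than $1$; since $p$ is the smallest prime dividing $|R|$, this index is at least $p$, so each non-central term contributes at most $\frac{1}{p}$. This yields
\[
\Pr(S,K)\le \frac{1}{|S|}\left(|Z(S,K)|+\frac{|S|-|Z(S,K)|}{p}\right),
\]
and substituting $|S|=p^n|Z(S,K)|$ and simplifying gives exactly $\frac{p^n+p-1}{p^{n+1}}$.

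For the lower bound with $S=K$, recall $Z(K,K)=Z(K)$, so $|K:Z(K)|=p^n$. Here I need an \emph{upper} bound on $|[k,K]|$ for non-central $k$. The key step is the chain $Z(K)\subsetneq C_K(k)\subsetneq K$: the left inclusion is strict because $k\in C_K(k)\setminus Z(K)$, and the right because $k\notin Z(K)$. Since $|K:Z(K)|=p^n$, the intermediate index $|C_K(k):Z(K)|$ divides $p^n$ and is nontrivial, hence is at least $p$; therefore $|[k,K]|=|K:C_K(k)|\le p^{\,n-1}$. Consequently each non-central term contributes at least $\frac{1}{p^{\,n-1}}$, giving
\[
\Pr(K)\ge \frac{1}{|K|}\left(|Z(K)|+\frac{|K|-|Z(K)|}{p^{\,n-1}}\right),
\]
and substituting $|K|=p^n|Z(K)|$ delivers $\frac{p^n+p^{\,n-1}-1}{p^{\,2n-1}}$.

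The main obstacle is precisely the strict chain $Z(K)\subsetneq C_K(k)\subsetneq K$ together with the observation that all the relevant additive indices are powers of $p$. This is what upgrades the crude estimate $|[k,K]|\le p^n$ to the sharper $|[k,K]|\le p^{\,n-1}$ and hence produces the stated lower bound rather than a weaker one; everything else is the same counting already used for the earlier bounds in this section.
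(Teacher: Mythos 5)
Your proposal is correct and follows essentially the same route as the paper: split the sum in Corollary \ref{formula1} over $Z(S,K)$ and its complement, bound $|[s,K]|\ge p$ below for the upper bound, and for $S=K$ use the strict chain $Z(K)\subsetneq C_K(k)\subsetneq K$ to get $|C_K(k)|\ge p|Z(K)|$, i.e.\ $|[k,K]|\le p^{n-1}$. The only cosmetic difference is that you work with $1/|[s,K]|$ where the paper works with $|C_K(s)|$; these are interchangeable by Lemma \ref{lemma1}.
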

\begin{proof}
If $s\in S\setminus Z(S, K)$ then $C_K(s)\neq K$ and hence $\frac {|K|}{|C_K(s)|}\geq p$. Therefore, by Corollary \ref{formula1},  we have
\begin{align*}
\Pr(S, K) = &\frac {1}{|S||K|}\underset{s\in Z(S, K)}{\sum}|C_K(s)| + \frac {1}{|S||K|}\underset{s\in S\setminus Z(S, K)}{\sum}|C_K(s)|\\
\leq & \frac{|Z(S, K)|}{|S|} + \frac{|S|-|Z(S, K)|}{p|S|} = \frac {p^n + p - 1}{p^{n + 1}}.
\end{align*}
If $S = K$ then for $s \in S \setminus Z(S, K)$ we have $Z(S, K) \subsetneq C_K(s) \ne K$ and so $|C_K(s)| \geq p|Z(S, K)|$. Therefore, by Corollary \ref{formula1}, we have
\[
\Pr(S, K) \geq \frac{|Z(S, K)|}{|S|} + \frac{p|Z(S, K)|(|S|-|Z(S, K)|)}{|S||K|} = \frac {p^n + p^{n - 1} - 1}{p^{2n - 1}}.
\]
\end{proof}

\begin{theorem}\label{prop1}
Let $S$  and $K$ be two subrings of $R$ and $p$  the smallest prime dividing $|R|$. Then
\begin{align*}
\Pr(S, K)\geq & \frac {|Z(S, K)|}{|S|}+\frac {p(|S|-|X_S|-|Z(S, K)|)+|X_S|}{|S||K|}\\
\textup{ and } \Pr(S,K)\leq &\frac {(p-1)|Z(S, K)|+|S|}{p|S|}-\frac {|X_S|(|K|-p)}{p|S||K|}
\end{align*}
where $X_S = \{s\in S: C_K(s) = \{0\}\}$. Moreover, in each of these bounds, $S$ and $K$ can be interchanged.
\end{theorem}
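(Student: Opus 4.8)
The plan is to apply the computing formula of Corollary~\ref{formula1}, namely $\Pr(S,K) = \frac{1}{|S||K|}\sum_{s \in S}|C_K(s)|$, and to estimate the sum by splitting the index set $S$ according to the size of the centralizer $C_K(s)$. First I would partition $S$ into three pairwise disjoint pieces: the set $Z(S,K)$ of elements commuting with all of $K$, on which $C_K(s) = K$ so that $|C_K(s)| = |K|$; the set $X_S = \{s \in S : C_K(s) = \{0\}\}$, on which $|C_K(s)| = 1$; and the remainder $S \setminus (Z(S,K) \cup X_S)$. The sets $Z(S,K)$ and $X_S$ are disjoint, since a centralizer cannot be simultaneously equal to $K$ and to $\{0\}$ unless $K = \{0\}$, and together with the remainder they exhaust $S$.

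For an element $s$ in the remainder, $C_K(s)$ is a proper, nonzero additive subgroup of $(K,+)$. Since $|K|$ divides $|R|$ and $p$ is the smallest prime dividing $|R|$, Lagrange's theorem gives that both $|C_K(s)|$ and the index $|K : C_K(s)|$ are integers exceeding $1$ that divide $|K|$, whence each is at least $p$; consequently $p \le |C_K(s)| \le |K|/p$ for every $s$ in the remainder. I would then feed the lower estimate $|C_K(s)| \ge p$ into the sum to obtain the lower bound for $\Pr(S,K)$, and the upper estimate $|C_K(s)| \le |K|/p$ to obtain the upper bound. Explicitly, writing $m = |S| - |Z(S,K)| - |X_S|$ for the size of the remainder, the sum $\sum_{s \in S}|C_K(s)|$ lies between $|Z(S,K)||K| + |X_S| + pm$ and $|Z(S,K)||K| + |X_S| + \frac{|K|}{p}m$; dividing by $|S||K|$ yields the two displayed inequalities after a routine rearrangement (for the upper bound, collecting the $Z(S,K)$ and $|S|$ terms over the common denominator $p|S|$ and combining the two $|X_S|$ contributions into the single factor $-\frac{|X_S|(|K|-p)}{p|S||K|}$).

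The final claim, that $S$ and $K$ may be interchanged, follows at once from the symmetry $\Pr(S,K) = \Pr(K,S)$ recorded in Corollary~\ref{formula1}: applying the argument above to the pair $(K,S)$ produces the same two inequalities with $Z(K,S)$, $|K|$ and $X_K = \{k \in K : C_S(k) = \{0\}\}$ in place of $Z(S,K)$, $|S|$ and $X_S$, and these are again bounds for the single quantity $\Pr(S,K)$. The only genuinely non-routine step is the two-sided centralizer estimate $p \le |C_K(s)| \le |K|/p$ on the middle piece; once the minimality of $p$ is invoked to control both the order and the index of $C_K(s)$ simultaneously, the remainder of the proof is bookkeeping.
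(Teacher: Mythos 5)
Your proposal is correct and follows essentially the same route as the paper: both decompose $\sum_{s\in S}|C_K(s)|$ over the three pieces $Z(S,K)$, $X_S$, and the remainder, bound the middle piece by $p\leq |C_K(s)|\leq |K|/p$ using the minimality of $p$, and invoke $\Pr(S,K)=\Pr(K,S)$ for the interchange. The only cosmetic difference is that the paper isolates the degenerate case $[S,K]=\{0\}$ explicitly at the outset, whereas you fold it into the disjointness remark.
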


\begin{proof}
If $[S, K] = \{0\}$ then $\Pr(S, K) = 1$, $Z(S, K) = S$ and $X_S = \{0\}$ or $\phi$ according as $K = \{0\}$ or $K \ne \{0\}$. If $K \ne \{0\}$  then both the sides of the above inequalities give  $1$. Otherwise, it is routine to see that
\[
1 - \frac{p - 1}{|S|} < \Pr(S, K) < 1 + \frac{p - 1}{p|S|}.
\]

 Let $[S, K]\neq \{0\}$ then $X_S\cap Z(S, K) = \phi$. Therefore
\begin{align}\label{eq0002}
\underset{s\in S}{\sum}|C_K(s)|=&\underset{s\in X_S}{\sum}|C_K(s)|+\underset{s\in Z(S,K)}{\sum}|C_K(s)|+\underset{s\in S\setminus (X_S\cup Z(S,K))}{\sum}|C_K(s)|\nonumber\\
=& |X_S| +  |K| |Z(S, K)| + \underset{s\in S\setminus (X_S\cup Z(S,K))}{\sum}|C_K(s)|.
\end{align}
Notice that for all $s\in S\setminus (X_S\cup Z(S,K))$ we have $\{0\}\neq C_K(s)\neq K$ which gives $p\leq |C_K(s)|\leq \frac {|K|}{p}$. Hence
\begin{align}\label{eq0003}
(|S| - |X_S| - |Z(S,K)|)p \leq &\underset{s\in S\setminus (X_S\cup Z(S,K))}{\sum}|C_K(s)|\nonumber\\
\leq & (|S| - |X_S| - |Z(S,K)|)\frac{|K|}{p}.
\end{align}
Now the required inequalities can be obtained using Corollary \ref{formula1}, \eqref{eq0002} and \eqref{eq0003}.

The last part of the proposition follows from the fact that $\Pr(S, K) = \Pr(K, S)$.
\end{proof}

Putting $K = R$ in Theorem \ref{prop1} we get an upper bound for $\Pr(S, R)$ which is better than the upper bound obtained in  \cite[Theorem 2.5]{jutireka}.
%However, the lower bound of $\Pr(S, R)$ in  \cite[Theorem 2.5]{jutireka} is better than the lower bound obtained from Theorem \ref{prop1}.
\begin{corollary}
Let $S$  and $K$ be two subrings of $R$. If $[S, K]\neq \{0\}$ and $p$  the smallest prime dividing $|R|$  then $\Pr(S, K)\leq \frac {2p-1}{p^2}$. In particular, $\Pr(S,K)\leq \frac {3}{4}$.
\end{corollary}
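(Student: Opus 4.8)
The plan is to derive everything from the upper bound already established in Theorem \ref{prop1}, namely
\[
\Pr(S,K)\leq \frac {(p-1)|Z(S, K)|+|S|}{p|S|}-\frac {|X_S|(|K|-p)}{p|S||K|}.
\]
First I would dispose of the subtracted term. Since $[S,K]\neq\{0\}$ forces $K\neq\{0\}$, the subgroup $(K,+)$ of $(R,+)$ is nontrivial, so $|K|$ is a divisor of $|R|$ exceeding $1$ and hence $|K|\geq p$. Thus $\frac{|X_S|(|K|-p)}{p|S||K|}$ is nonnegative, and dropping it gives the cleaner estimate $\Pr(S,K)\leq \frac{(p-1)|Z(S,K)|+|S|}{p|S|}$.

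The crux is then to bound $|Z(S,K)|$ from above. The key observation is that $[S,K]\neq\{0\}$ guarantees $Z(S,K)\neq S$: if every element of $S$ commuted with every element of $K$ we would have $[s,k]=0$ for all $s\in S$ and $k\in K$, whence $[S,K]=\{0\}$, a contradiction. Consequently $(Z(S,K),+)$ is a proper subgroup of $(S,+)$, so the index $|S:Z(S,K)|$ is greater than $1$ and divides $|S|$, which in turn divides $|R|$. Since $p$ is the smallest prime dividing $|R|$, any divisor of $|R|$ exceeding $1$ is at least $p$; therefore $|S:Z(S,K)|\geq p$, i.e. $|Z(S,K)|\leq \frac{|S|}{p}$.

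Substituting this bound into the simplified estimate and simplifying yields
\[
\Pr(S,K)\leq \frac{(p-1)\frac{|S|}{p}+|S|}{p|S|}=\frac{2p-1}{p^2},
\]
which is the first assertion. For the final claim I would note that $\frac{2p-1}{p^2}=\frac{2}{p}-\frac{1}{p^2}$ is strictly decreasing in $p$, so over all primes its largest value occurs at $p=2$, giving $\frac{3}{4}$. None of these steps poses a genuine obstacle; the only point requiring care is the divisibility argument, ensuring that the index $|S:Z(S,K)|$ is at least $p$ rather than merely greater than $1$, which is precisely where the minimality of $p$ enters.
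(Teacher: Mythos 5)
Your argument is correct and follows essentially the same route as the paper: apply the upper bound from Theorem \ref{prop1}, use $[S,K]\neq\{0\}$ to get $Z(S,K)\neq S$ and hence $|Z(S,K)|\leq |S|/p$ via the minimality of $p$, and substitute. The only difference is that you justify dropping the subtracted $|X_S|$-term explicitly (via $|K|\geq p$), a step the paper leaves implicit.
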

\begin{proof}
Since $[S, K]\neq \{0\}$ we have  $Z(S,K) \ne S$. Therefore $|Z(S,K)|\leq \frac {|S|}{p}$. Hence, by second part of Theorem \ref{prop1}, we have
\[
\Pr(S, K)\leq \frac {(p - 1)|Z(S, K)|+|S|}{p|S|} \leq \frac {(p - 1)\frac {|S|}{p} + |S|}{p|S|} = \frac {2p-1}{p^2}.
\]
The particular case follows from the fact  $p \geq 2$ and $\frac{2p - 1}{p^2} \leq \frac{3}{4}$ for any prime~$p$.
\end{proof}

We have, for all $s \in S$
\begin{equation}\label{eqlb}
|[S, K]|  \geq |[s, K]| = |K : C_K(s)|.
\end{equation}
Therefore, by Corollary \ref{formula1} and \eqref{eqlb}, we have the following lower bound for   $\Pr(S, K)$.
\begin{proposition}\label{newlb1}
Let $S$ and $K$ be two subrings of  $R$. Then
\[
\Pr(S, K) \geq \frac{1}{|[S, K]|}\left(1 + \frac{|[S, K]| - 1}{|S : Z(S, K)|} \right).
\]
In particular, if $Z(S, K) \ne S$ then $\Pr(S, K) > \frac{1}{|[S, K]|}$.
\end{proposition}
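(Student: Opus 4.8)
The plan is to start from the counting formula in Corollary \ref{formula1}, namely $\Pr(S, K) = \frac{1}{|S|}\sum_{s \in S}\frac{1}{|[s, K]|}$, and split the sum according to whether or not $s$ centralizes all of $K$. For $s \in Z(S, K)$ we have $[s, K] = \{0\}$, so $|[s, K]| = 1$ and each such term contributes exactly $1$; there are $|Z(S, K)|$ such terms. For the remaining $s \in S \setminus Z(S, K)$, inequality \eqref{eqlb} gives $|[s, K]| \leq |[S, K]|$, hence $\frac{1}{|[s, K]|} \geq \frac{1}{|[S, K]|}$, and there are $|S| - |Z(S, K)|$ such terms.

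Combining these two estimates, I would obtain
\[
\Pr(S, K) \geq \frac{1}{|S|}\left(|Z(S, K)| + \frac{|S| - |Z(S, K)|}{|[S, K]|}\right).
\]
The remaining work is purely algebraic: writing $|S : Z(S, K)| = |S|/|Z(S, K)|$ and placing both sides over the common denominator $|S||[S, K]|$, this right-hand side rearranges to $\frac{1}{|[S, K]|}\left(1 + \frac{|[S, K]| - 1}{|S : Z(S, K)|}\right)$, which is exactly the claimed bound.

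For the \emph{in particular} clause, I would observe that $Z(S, K) \ne S$ forces the existence of some $s \in S \setminus Z(S, K)$ with $[s, K] \ne \{0\}$, whence $[S, K] \ne \{0\}$ and $|[S, K]| \geq 2$; moreover $0 \in Z(S, K)$ always gives $|Z(S, K)| \geq 1$, and $Z(S, K) \ne S$ gives $|S : Z(S, K)| > 1$. Consequently the correction term $\frac{|[S, K]| - 1}{|S : Z(S, K)|}$ is strictly positive, and the main bound yields the strict inequality $\Pr(S, K) > \frac{1}{|[S, K]|}$.

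I do not anticipate any genuine obstacle here, since the argument reduces to a single splitting of the summation formula followed by a routine rearrangement of fractions. The only points requiring care are verifying the algebraic identity that matches the two displayed forms of the bound and confirming that the centralizing terms each contribute exactly $1$ because $[s, K] = \{0\}$ for $s \in Z(S, K)$.
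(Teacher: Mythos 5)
Your proof is correct and follows exactly the route the paper intends: the paper derives this proposition directly from Corollary \ref{formula1} and inequality \eqref{eqlb}, and your splitting of the sum over $Z(S,K)$ versus $S\setminus Z(S,K)$, together with the algebraic rearrangement, is precisely the omitted computation. The handling of the strict inequality in the \emph{in particular} clause is also sound.
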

\noindent It is worth noting that \cite[Theorem 2. 17]{jutireka} follows from Proposition \ref{newlb1}.

\begin{theorem}\label{prop2}
Let $S$  and $K$ be two subrings of $R$ such that $\Pr(S, K) = \frac{2p - 1}{p^2}$ for some prime $p$. Then $p$ divides $|R|$. If $p$ is the smallest prime dividing $|R|$ then
\[
\frac{S}{Z(S,K)}\cong\mathbb Z_p\cong\frac{K}{Z(K,S)}
\]
and hence $S\neq K$. In particular, if $\Pr(S, K) = \frac{3}{4}$ then
\[
\frac{S}{Z(S,K)}\cong\mathbb Z_2\cong\frac{K}{Z(K,S)}.
\]
\end{theorem}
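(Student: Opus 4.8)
The plan is to read $\frac{2p-1}{p^2}$ as sitting exactly at the top of the chain of upper bounds proved above and to extract the forced equality conditions. For the first assertion, observe that $\frac{2p-1}{p^2}<1$, so by Corollary \ref{formula1} we may write $\frac{2p-1}{p^2}=\frac{1}{|S||K|}\sum_{s\in S}|C_K(s)|$, that is $(2p-1)|S||K|=p^2\sum_{s\in S}|C_K(s)|$. Since $2p-1\equiv -1\pmod p$, the prime $p$ does not divide $2p-1$, so $p^2\mid |S||K|$; in particular $p$ divides $|S|$ or $|K|$, and as both orders divide $|R|$ (being orders of additive subgroups of $(R,+)$), we conclude $p\mid |R|$.

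Now assume $p$ is the smallest prime dividing $|R|$. First I would record that $[S,K]\neq\{0\}$: otherwise $\Pr(S,K)=1$ by the remark in the introduction, contradicting $\frac{2p-1}{p^2}<1$. Hence $Z(S,K)\neq S$, so $Z(S,K)$ is a proper additive subgroup of $S$ and $|Z(S,K)|\le |S|/p$; moreover $K\neq\{0\}$, so $|K|\ge p$ as well. Dropping the nonnegative subtracted term in the upper bound of Theorem \ref{prop1} and then using $|Z(S,K)|\le|S|/p$ yields
\[
\Pr(S,K)\le \frac{(p-1)|Z(S,K)|+|S|}{p|S|}\le\frac{(p-1)\frac{|S|}{p}+|S|}{p|S|}=\frac{2p-1}{p^2}.
\]
Since the two ends coincide with our hypothesis, every inequality in this chain is an equality; as the middle expression is strictly increasing in $|Z(S,K)|$, the second equality forces $|Z(S,K)|=|S|/p$, i.e. $|S:Z(S,K)|=p$. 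A group of prime order is cyclic, so $\frac{S}{Z(S,K)}\cong\mathbb{Z}_p$.

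For the symmetric statement I would invoke $\Pr(S,K)=\Pr(K,S)$ from Corollary \ref{formula1} and run the identical argument with the roles of $S$ and $K$ interchanged, obtaining $|K:Z(K,S)|=p$ and hence $\frac{K}{Z(K,S)}\cong\mathbb{Z}_p$. To see $S\neq K$, suppose instead $S=K$; then $Z(S,K)=Z(S)$ and $S$ is non-commutative since $[S,K]\neq\{0\}$, so Lemma \ref{lemma002} applied to $S\subseteq K=S$ says $\frac{S}{Z(S,K)}$ is \emph{not} cyclic, contradicting $\frac{S}{Z(S,K)}\cong\mathbb{Z}_p$. Finally the particular case is immediate: $\frac{3}{4}=\frac{2\cdot 2-1}{2^2}$, the first assertion gives $2\mid|R|$, and $2$ is automatically the smallest prime dividing $|R|$, so the previous paragraphs apply with $p=2$.

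The main obstacle I anticipate is the bookkeeping of the equality conditions rather than any deep new idea: one must be sure the hypothesis places $\Pr(S,K)$ at the very top of the bound from Theorem \ref{prop1}, so that simultaneously the discarded term $\frac{|X_S|(|K|-p)}{p|S||K|}$ vanishes and the estimate $|Z(S,K)|\le|S|/p$ is tight, and it is the strict monotonicity in $|Z(S,K)|$ that upgrades this to the exact index $|S:Z(S,K)|=p$. Verifying $|K|\ge p$, so that the discarded term is genuinely nonnegative, is the small point that makes the displayed chain legitimate.
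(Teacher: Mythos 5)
Your proposal is correct and follows essentially the same route as the paper: the divisibility claim from Corollary \ref{formula1}, the index bound $|S:Z(S,K)|=p$ from the upper bound in Theorem \ref{prop1} combined with $Z(S,K)\neq S$ and the minimality of $p$, the symmetric statement by swapping $S$ and $K$, and $S\neq K$ via Lemma \ref{lemma002}. Your explicit check that $|K|\ge p$ (so the discarded $X_S$-term is nonnegative) is a small point of care the paper leaves implicit, but the argument is the same.
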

\begin{proof}
If $\Pr(S,K) = \frac{2p-1}{p^2}$ then, by Corollary \ref{formula1}, we have   $p$ divides $|S||K|$ and hence $p$ divides $|R|$.

For the second part we have, by Theorem \ref{prop1},
\[
\frac{2p-1}{p^2} \leq \frac {(p - 1)|Z(S, K)|+|S|}{p|S|} = \frac{p - 1}{p|S : Z(S, K)|} + \frac{1}{p}
\]
which gives $|S : Z(S, K)|\leq p$. Since  $\Pr(S, K) \ne 1$ we have $S \ne Z(S, K)$ and hence $|S : Z(S, K)| = p$. Therefore, $\frac{S}{Z(S,K)}\cong\mathbb Z_p$. Interchanging the role of $S$ and $K$ we get $\frac{K}{Z(K, S)}\cong\mathbb Z_p$.

If $S = K$ then $\frac{S}{Z(S,K)}\cong\mathbb Z_p\cong\frac{K}{Z(K,S)}$ implies $S$ and $K$ are commutative (by Lemma \ref{lemma002}). Therefore, $\frac{S}{Z(S,K)}$ and $\frac{K}{Z(K,S)}$ are trivial group, which is a contradiction. Hence, $S \ne K$.
%These give that $S$ and $K$ are commutative (by Lemma \ref{lemma002}). Therefore, if $S = K$ then $\Pr(S, K) = 1$ which is not possible. Hence, $S \ne K$.

The last part follows considering   $p = 2$.
\end{proof}
\noindent The following lemma  is useful in the subsequent results.
\begin{lemma}\label{newlemma}
Let $S$ and $K$ be two subrings of $R$. If $[x, S] \subseteq [x, K]$ for all $x \in S\cup K$ then
\[
\Pr(K) \leq \Pr(S,K) \leq \Pr(S).
\]
\end{lemma}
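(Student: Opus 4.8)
=== PROOF PROPOSAL ===

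The plan is to use the computing formula from Corollary \ref{formula1} in both of its equivalent forms, applying the form $\Pr(S,K) = \frac{1}{|S|}\sum_{s\in S}\frac{1}{|[s,K]|}$ for the upper bound and the symmetric form $\Pr(S,K) = \Pr(K,S) = \frac{1}{|K|}\sum_{k\in K}\frac{1}{|[k,S]|}$ for the lower bound. The hypothesis $[x,S]\subseteq[x,K]$ for all $x\in S\cup K$ is the single structural input, and the key observation is that it yields the pointwise comparison $|[x,S]|\le|[x,K]|$ for every relevant $x$, which translates into termwise inequalities between the corresponding sums.

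For the upper bound $\Pr(S,K)\le\Pr(S)$, I would specialize the hypothesis to $x\in S$, giving $[s,S]\subseteq[s,K]$ and hence $|[s,S]|\le|[s,K]|$ for all $s\in S$. Taking reciprocals reverses the inequality, so $\frac{1}{|[s,K]|}\le\frac{1}{|[s,S]|}$ for each $s\in S$. Summing over $s\in S$ and dividing by $|S|$ yields
\[
\Pr(S,K)=\frac{1}{|S|}\sum_{s\in S}\frac{1}{|[s,K]|}\le\frac{1}{|S|}\sum_{s\in S}\frac{1}{|[s,S]|}=\Pr(S,S)=\Pr(S),
\]
where the last equality is the $K=S$ case of Corollary \ref{formula1}.

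For the lower bound $\Pr(K)\le\Pr(S,K)$, I would instead invoke the symmetry $\Pr(S,K)=\Pr(K,S)$ (Proposition \ref{symmetricity} with $r=0$) and specialize the hypothesis to $x\in K$, giving $[k,S]\subseteq[k,K]$ and hence $|[k,S]|\le|[k,K]|$ for all $k\in K$. Taking reciprocals and summing over $k\in K$ gives
\[
\Pr(K)=\frac{1}{|K|}\sum_{k\in K}\frac{1}{|[k,K]|}\le\frac{1}{|K|}\sum_{k\in K}\frac{1}{|[k,S]|}=\Pr(K,S)=\Pr(S,K),
\]
which completes the chain. The argument is essentially a direct application of the two faces of the computing formula, so there is no serious obstacle; the only point requiring care is matching each half of the double inequality to the correct specialization of the hypothesis ($x\in S$ for the upper bound, $x\in K$ for the lower) and remembering to apply the symmetry $\Pr(S,K)=\Pr(K,S)$ before reindexing the lower-bound sum over $K$.
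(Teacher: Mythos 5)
Your proof is correct and is essentially the paper's own argument: the paper writes the termwise comparison in the equivalent centralizer form $\frac{|C_S(x)|}{|S|}\geq\frac{|C_K(x)|}{|K|}$, which by Lemma \ref{lemma1} is exactly your inequality $\frac{1}{|[x,S]|}\geq\frac{1}{|[x,K]|}$, and it likewise handles the two halves by specializing the hypothesis to $x\in S$ and $x\in K$ respectively after invoking the symmetry $\Pr(S,K)=\Pr(K,S)$.
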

\begin{proof}
By Corollary \ref{formula1} we have
\[
\Pr(S) = \frac {1}{|S|}\underset{s\in S}{\sum}\frac{|C_S(s)|}{|S|} \geq \frac {1}{|S|}\underset{s\in S}{\sum}\frac {|C_K(s)|}{|K|} = \Pr(S, K)
\]
and
\[
\Pr(S, K) = \frac {1}{|K|}\underset{k \in K}{\sum}\frac{|C_S(k)|}{|S|} \geq \frac {1}{|K|}\underset{k \in K}{\sum}\frac {|C_K(k)|}{|K|} = \Pr(K).
\]
Hence the lemma follows.
\end{proof}
It may be mentioned here that  \cite[Theorem 2.2]{jutireka} follows from the above lemma.

\begin{proposition}
Let $S$ and $K$ be two subrings of  $R$ such that $[x, S] \subseteq [x, K]$ for all $x \in S$. If $S$ is non-commutative and $p$ is the smallest prime dividing $|S|$  then $\Pr(S,K)\leq \frac {p^2 + p - 1}{p^3}$.
\end{proposition}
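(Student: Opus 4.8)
The plan is to reduce the statement to a bound on $\Pr(S)$ and then estimate $\Pr(S)$ by a centralizer count. First I would note that, although Lemma \ref{newlemma} is stated under the two-sided hypothesis $[x, S] \subseteq [x, K]$ for all $x \in S \cup K$, the single inequality $\Pr(S, K) \leq \Pr(S)$ appearing there uses only the containment for $x \in S$, which is exactly what is assumed here. Concretely, for each $s \in S$ the containment $[s, S] \subseteq [s, K]$ gives $|[s, S]| \leq |[s, K]|$, hence $\frac{|C_K(s)|}{|K|} \leq \frac{|C_S(s)|}{|S|}$ by Lemma \ref{lemma1}; summing over $s \in S$ and invoking Corollary \ref{formula1} yields $\Pr(S, K) \leq \Pr(S)$. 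It therefore suffices to prove $\Pr(S) \leq \frac{p^2 + p - 1}{p^3}$.

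Next I would estimate $\Pr(S) = \frac{1}{|S|^2}\sum_{s \in S}|C_S(s)|$ by splitting the sum according to whether $s$ lies in $Z(S) = Z(S, S)$ or not. Central elements contribute $C_S(s) = S$, while for $s \in S \setminus Z(S)$ the centralizer satisfies $C_S(s) \neq S$, so $|S : C_S(s)|$ is a divisor of $|S|$ larger than $1$ and hence at least $p$; this gives $|C_S(s)| \leq \frac{|S|}{p}$. Performing the split leads to $\Pr(S) \leq \frac{|Z(S)|}{|S|} + \frac{|S| - |Z(S)|}{p|S|}$, an expression increasing in $\frac{|Z(S)|}{|S|} = \frac{1}{|S : Z(S)|}$ since $p \geq 2$.

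The remaining input is a lower bound on $|S : Z(S)|$. As $S$ is non-commutative, Lemma \ref{lemma002} applied with $K = S$ shows that $\frac{S}{Z(S)}$ is not cyclic. A finite abelian group of squarefree order is cyclic, so non-cyclicity forces $q^2 \mid |S : Z(S)|$ for some prime $q$; since $q \geq p$ this yields $|S : Z(S)| \geq p^2$. Substituting $\frac{|Z(S)|}{|S|} \leq \frac{1}{p^2}$ into the increasing bound above and simplifying gives $\Pr(S) \leq \frac{1}{p} + \frac{p - 1}{p^3} = \frac{p^2 + p - 1}{p^3}$, which together with the first paragraph completes the proof.

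I expect the main obstacle to be two bookkeeping points rather than any deep difficulty: verifying that only the $x \in S$ half of the hypothesis is required to extract $\Pr(S, K) \leq \Pr(S)$ from the proof of Lemma \ref{newlemma}, and converting the qualitative non-cyclicity of $\frac{S}{Z(S)}$ into the quantitative bound $|S : Z(S)| \geq p^2$. Once these are secured, the centralizer estimates reduce to routine divisibility arguments.
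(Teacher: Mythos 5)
Your proof is correct and follows the same reduction as the paper: bound $\Pr(S,K)$ by $\Pr(S)$ and then bound $\Pr(S)$. The paper's proof is a one-line citation --- it invokes Lemma \ref{newlemma} together with MacHale's Theorem 2 of \cite{dmachale}, which is exactly the statement $\Pr(S)\leq \frac{p^2+p-1}{p^3}$ for a finite non-commutative ring $S$ with $p$ the smallest prime dividing $|S|$. You differ in two ways, both to your credit. First, you notice that Lemma \ref{newlemma} as stated assumes $[x,S]\subseteq [x,K]$ for all $x\in S\cup K$, whereas the proposition only assumes it for $x\in S$; you correctly isolate the half of the lemma's proof that uses only the $x\in S$ containment (via Lemma \ref{lemma1} and Corollary \ref{formula1}) to get $\Pr(S,K)\leq \Pr(S)$. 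The paper silently elides this mismatch. Second, rather than citing MacHale, you reprove his bound: the centralizer split over $Z(S)$ versus $S\setminus Z(S)$, the estimate $|C_S(s)|\leq |S|/p$ for non-central $s$, and the step from non-cyclicity of $S/Z(S)$ (Lemma \ref{lemma002} with $K=S$, plus the fact that a finite abelian group of squarefree order is cyclic) to $|S:Z(S)|\geq p^2$ are all sound, and the final arithmetic $\frac{1}{p}+\frac{p-1}{p^3}=\frac{p^2+p-1}{p^3}$ checks out. Your version is more self-contained and slightly more careful; the paper's is shorter by outsourcing the ring-theoretic content to the cited reference.
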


\begin{proof}
The result follows from  \cite[Theorem 2]{dmachale} and Lemma \ref{newlemma}.
% we have
%\[
%\Pr(S, K)\leq \Pr(S) \leq \frac {p^2+p-1}{p^3}
%\]
\end{proof}

\begin{theorem}\label{5/8like}
Let $S \subseteq K$ be two non-commutative subrings of   $R$ and $\Pr(S, K) = \frac {p^2 + p - 1}{p^3}$ for some prime $p$. Then $p$ divides $|R|$. If $p$ is the smallest prime dividing $|R|$ then
\[
\frac {S}{Z(S, K)}\cong \mathbb Z_p\times \mathbb Z_p.
\]
In particular, if $\Pr(S, K) = \frac {5}{8}$ then $\frac {S}{Z(S, K)}\cong \mathbb Z_2\times \mathbb Z_2$.
\end{theorem}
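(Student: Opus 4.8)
The plan is to mirror the proof of Theorem~\ref{prop2}, treating the divisibility assertion and the structural assertion separately. For the first assertion, observe that Corollary~\ref{formula1} forces $|S||K|\Pr(S,K) = \sum_{s \in S}|C_K(s)|$ to be a positive integer, so that $\frac{(p^2+p-1)|S||K|}{p^3} \in \Z$. Since $p^2 + p - 1 \equiv -1 \pmod{p}$, we have $\gcd(p^2+p-1,\,p) = 1$, and hence $p^3 \mid |S||K|$. In particular $p \mid |S||K|$, and as both $|S|$ and $|K|$ divide $|R|$, I conclude $p \mid |R|$.

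For the structural assertion, assume from now on that $p$ is the smallest prime dividing $|R|$. The first step is to simplify the upper bound of Theorem~\ref{prop1}. Because $S \subseteq K$, every nonzero $s \in S$ satisfies $s \in C_K(s)$, while $C_K(0) = K \ne \{0\}$ (as $K$ is non-commutative); thus $C_K(s) \ne \{0\}$ for every $s \in S$, i.e.\ $X_S = \phi$. Substituting $|X_S| = 0$ into the upper bound of Theorem~\ref{prop1} gives
\[
\Pr(S,K) \leq \frac{(p-1)|Z(S,K)| + |S|}{p|S|} = \frac{1}{p} + \frac{p-1}{p\,|S : Z(S,K)|}.
\]

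The second step is to bound $m := |S : Z(S,K)|$ from below. Since $S$ is non-commutative and $S \subseteq K$, Lemma~\ref{lemma002} shows that $\frac{S}{Z(S,K)}$ is a non-cyclic finite abelian group. Its order $m$ divides $|R|$, so each prime dividing $m$ is at least $p$; moreover, non-cyclicity forces some Sylow $q$-subgroup (with $q \geq p$) to be non-cyclic, whence $q^2 \mid m$ and therefore $m \geq p^2$. Plugging $m \geq p^2$ into the displayed estimate yields $\Pr(S,K) \leq \frac{p^2+p-1}{p^3}$. As equality is assumed, the whole chain must consist of equalities, forcing $m = p^2$. The only non-cyclic abelian group of order $p^2$ is $\Z_p \times \Z_p$, so $\frac{S}{Z(S,K)} \cong \Z_p \times \Z_p$; taking $p = 2$ then gives the final statement.

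The main obstacle is the reduction to the clean one-variable bound: one must first verify that $X_S$ is empty so that Theorem~\ref{prop1} collapses to a function of $|S:Z(S,K)|$ alone, and then argue $|S:Z(S,K)| \geq p^2$. This last inequality is exactly where Lemma~\ref{lemma002} (non-cyclicity) must be combined with the Sylow decomposition together with the hypothesis that $p$ is the \emph{smallest} prime dividing $|R|$; without minimality one could only conclude $m \geq q^2$ for some possibly larger prime $q$, which would no longer match the target value $\frac{p^2+p-1}{p^3}$.
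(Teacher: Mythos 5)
Your proposal is correct and follows essentially the same route as the paper: both rest on the upper bound of Theorem~\ref{prop1} specialized to $|S:Z(S,K)|$ and on Lemma~\ref{lemma002} to rule out cyclicity, with the smallest-prime hypothesis pinning the index at $p^2$. You merely make explicit some steps the paper leaves implicit (the divisibility argument for $p\mid |R|$, the emptiness of $X_S$, and the Sylow argument giving $|S:Z(S,K)|\geq p^2$), so no substantive difference.
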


\begin{proof}
If $\Pr(S, K) = \frac {p^2 + p - 1}{p^3}$ then by Corollary \ref{formula1}, we have $p$ divides $|S||K|$ and hence $p$ divides $|R|$.

By second part of Theorem \ref{prop1}, we have
\[
 \frac {p^2 + p - 1}{p^3} \leq \frac{(p - 1)|Z(S, K)| + |S|}{p|S|} = \frac{p - 1}{p|S : Z(S, K)|} + \frac{1}{p}
\]
which gives $|S : Z(S, K)| \leq p^2$. Since $\Pr(S, K) \ne 1$ we have $S \ne Z(S, K)$. Also $\frac{S}{Z(S, K)}$ is not cyclic as $S$ is non-commutative (by Lemma \ref{lemma002}). Hence $\frac{S}{Z(S, K)} \cong \mathbb Z_p\times \mathbb Z_p$.
%Interchanging the role of $S$ and $K$ we have $\frac{K}{Z(K, S)} \cong \mathbb Z_p\times \mathbb Z_p $.

The particular case follows considering $p = 2$.
\end{proof}
The following proposition gives partial converse of Theorem \ref{prop2} and Theorem \ref{5/8like}.
\begin{theorem}
Let $S \subseteq K$ be two subrings of  $R$.
\begin{enumerate}
\item
If $\frac {S}{Z(S, K)}\cong \mathbb Z_p$ and $|K : S| = n$ then $\Pr(S, K)\geq \frac {n + p - 1}{np}$. Further, if $p$ is the smallest prime dividing $|R|$ and $|K : S| = p$ then $\Pr(S, K) = \frac {2p - 1}{p^2}$.
\item
If $\frac {S}{Z(S, K)}\cong \mathbb Z_p\times \mathbb Z_p$ and $|K : S| = n$ then $\Pr(S, K)\geq \frac {(n + 2)p^2 - 2}{np^4}$. Further,  if $p$ is the smallest prime dividing $|R|$ and $|K : S| = 1$  then $\Pr(S, K) = \frac {p^2 + p - 1}{p^3}$.
\end{enumerate}
\end{theorem}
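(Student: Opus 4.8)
The plan is to apply the computing formula of Corollary~\ref{formula1}, namely $\Pr(S,K)=\frac{1}{|S||K|}\sum_{s\in S}|C_K(s)|$, and to split the sum over $s\in Z(S,K)$ and $s\in S\setminus Z(S,K)$. For $s\in Z(S,K)$ we have $C_K(s)=K$, so these terms contribute $\frac{|Z(S,K)|\,|K|}{|S||K|}=\frac{1}{|S:Z(S,K)|}$ exactly; in part (a) this is $\frac1p$ and in part (b) it is $\frac{1}{p^2}$. Everything then reduces to a good lower bound on $|C_K(s)|$ for the non-central $s$, and the two parts differ precisely in which bound is available.

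For part (a), the crucial observation is that $S/Z(S,K)\cong\mathbb Z_p$ is cyclic, so the contrapositive of Lemma~\ref{lemma002} forces $S$ to be \emph{commutative}. Hence $C_S(s)=S$ for every $s\in S$, and since $S\subseteq K$ this gives $S\subseteq C_K(s)$, i.e.\ $|C_K(s)|\ge|S|$. Feeding $|Z(S,K)|=|S|/p$ and $|C_K(s)|\ge|S|$ into the split sum and writing $|K|=n|S|$ yields $\Pr(S,K)\ge\frac1p+\frac{(p-1)|S|}{p|K|}=\frac{n+p-1}{np}$. For the ``Further'' clause with $n=p$ this lower bound already equals $\frac{2p-1}{p^2}$; and since $Z(S,K)\ne S$ we have $[S,K]\ne\{0\}$, so the upper bound $\Pr(S,K)\le\frac{2p-1}{p^2}$ proved earlier (valid when $p$ is the smallest prime dividing $|R|$) pins down equality.

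For part (b) one cannot assume $S$ is commutative, so the estimate $|C_K(s)|\ge|S|$ is unavailable; instead I would use the cruder but always valid fact that, for $s\notin Z(S,K)$, the two cosets $Z(S,K)$ and $s+Z(S,K)$ are disjoint and both lie in $C_K(s)$ (each $z\in Z(S,K)$ commutes with $s$, and so does $s+z$), whence $|C_K(s)|\ge 2|Z(S,K)|=2|S|/p^2$. Substituting into the split sum with $|Z(S,K)|=|S|/p^2$ gives $\Pr(S,K)\ge\frac{1}{p^2}+\frac{2(p^2-1)}{np^4}=\frac{(n+2)p^2-2}{np^4}$. For the ``Further'' clause we take $n=1$, i.e.\ $S=K$, so $\Pr(S,K)=\Pr(S)$ and $Z(S,K)=Z(S)$; here the crude bound is not tight and I would instead compute $|C_S(s)|$ exactly: for $s\notin Z(S)$ the subgroup $C_S(s)/Z(S)$ of $S/Z(S)\cong\mathbb Z_p\times\mathbb Z_p$ is proper (as $s$ is non-central) and nontrivial (it contains the nonzero coset $s+Z(S)$), hence has order $p$, so $|C_S(s)|=|S|/p$. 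The formula then evaluates to $\Pr(S)=\frac{1}{p^2}+\frac{p^2-1}{p^3}=\frac{p^2+p-1}{p^3}$.

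The main obstacle is choosing, in each part, a per-element lower bound on $|C_K(s)|$ that is simultaneously valid and strong enough: the two-coset bound $|C_K(s)|\ge 2|Z(S,K)|$ suffices for the quadratic quotient of part (b) but is too weak for part (a), where one must instead exploit that cyclicity of the quotient forces $S$ to be commutative via Lemma~\ref{lemma002}. A secondary point requiring care is the equality statements, where part (a) leans on the externally established upper bound $\frac{2p-1}{p^2}$ while part (b) needs the exact count $|C_S(s)|=|S|/p$, which in turn rests on the fact that the only proper nontrivial subgroups of $\mathbb Z_p\times\mathbb Z_p$ have order $p$.
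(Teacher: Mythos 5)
Your proof is correct and follows essentially the same route as the paper: split the computing formula of Corollary \ref{formula1} over $Z(S,K)$ and its complement, invoke Lemma \ref{lemma002} in part (a) to force $S$ commutative (giving $|C_K(s)|\ge |S|$ for non-central $s$), and use the two-coset bound $|C_K(s)|\ge 2|Z(S,K)|$ in part (b). The only cosmetic differences lie in the equality clauses: you sandwich against the earlier upper bound $\frac{2p-1}{p^2}$ in (a) and read off $|C_S(s)|=|S|/p$ from the subgroup structure of $\mathbb Z_p\times\mathbb Z_p$ in (b), whereas the paper pins down $|C_K(s)|$ exactly in both cases via the smallest-prime hypothesis; both finishes are valid.
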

\begin{proof}
(a) Since $\frac {S}{Z(S, K)}$ is cyclic, we have $S$ is commutative (by Lemma \ref{lemma002}). Therefore, if $s\in S\setminus Z(S, K)$ then $|C_K(s)|\geq |S| = \frac {|K|}{n}$. Now, by Corollary \ref{formula1}, we have
\begin{align*}
\Pr(S, K) &= \frac {|Z(S,K)|}{|S|} + \frac {1}{|S||K|}\underset{s\in S\setminus Z(S, K)}{\sum}|C_K(s)|\\
&\geq \frac {1}{p} + \frac {|S|-|Z(S,K)|}{n|S|} = \frac{n + p - 1}{np}.
\end{align*}
If $p$ is the smallest prime dividing $|R|$ and $|K : S| = p$ then $|C_K(s)| = \frac {|K|}{p}$  for $s\in S\setminus Z(S, K)$   and hence $\Pr(S, K) = \frac {2p - 1}{p^2}$.

(b) We have $Z(S, K) \subsetneq C_K(s)$ if $s\in S\setminus Z(S, K)$. Also, $|Z(S,K)|$ divides $|C_K(s)|$ and so
\begin{equation}\label{eqconv1}
|C_K(s)| \geq 2|Z(S,K)| = \frac{2|S|}{p^2} = \frac{2|K|}{np^2}
\end{equation}
for all $s\in S\setminus Z(S, K)$.
Now, by Corollary \ref{formula1} and \eqref{eqconv1}, we have
\[
\Pr(S, K) \geq \frac{1}{p^2} + \frac{2(|S| - |Z(S, K)|)}{np^2|S|} = \frac {(n + 2)p^2 - 2}{np^4}.
\]
If $p$ is the smallest prime dividing $|R|$ and $|K : S| = 1$ then $|C_K(s)| \geq p|Z(S,K)| =  \frac{|S|}{p}$ for $s\in S\setminus Z(S, K)$. Also, $C_K(s) \subsetneq S$ and so $|C_K(s)| =  \frac{|S|}{p}$ for  $s\in S\setminus Z(S, K)$. Therefore, Corollary \ref{formula1} gives
\[
\Pr(S, K) = \frac{1}{p^2} + \frac{|S| - |Z(S, K)|}{p|S|} = \frac {p^2 + p - 1}{p^3}.
\]
\end{proof}

\noindent The following corollary follows immediately.
\begin{corollary}
Let $S \subseteq K$ be two subrings of  $R$. Then
\begin{enumerate}
\item
If $\frac {S}{Z(S, K)}\cong \mathbb Z_2$ and $|K : S| = 2$ then $\Pr(S, K)= \frac {3}{4}$.
\item
If $\frac {S}{Z(S, K)}\cong \mathbb Z_2\times \mathbb Z_2$ and $|K : S| = 1$ then $\Pr(S, K)=\frac{5}{8}$.
\end{enumerate}
\end{corollary}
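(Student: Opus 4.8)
The plan is to obtain both parts as immediate specializations of the preceding theorem (the partial converse) to the prime $p = 2$; the only genuine work is to confirm that the hypotheses of its two ``Further'' clauses are met in each case. The common ingredient is that in both parts the isomorphism hypothesis forces $2$ to divide $|R|$: indeed $\frac{S}{Z(S,K)} \cong \mathbb{Z}_2$ gives $|S : Z(S,K)| = 2$ while $\frac{S}{Z(S,K)} \cong \mathbb{Z}_2 \times \mathbb{Z}_2$ gives $|S : Z(S,K)| = 4$, so in either situation $2 \mid |S|$ and hence $2 \mid |R|$. Since $2$ is the smallest prime integer, it is automatically the smallest prime dividing $|R|$ as soon as $2 \mid |R|$; this is the single observation needed to unlock both clauses, so I would isolate it at the outset.

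For part (a), I would apply part (a) of the preceding theorem with $p = 2$ and $n = |K : S| = 2$. The hypothesis $\frac{S}{Z(S,K)} \cong \mathbb{Z}_2$ matches the required $\frac{S}{Z(S,K)} \cong \mathbb{Z}_p$, the index condition $|K : S| = p = 2$ holds, and $2$ is the smallest prime dividing $|R|$ by the observation above. The ``Further'' clause of that part then yields $\Pr(S,K) = \frac{2p - 1}{p^2}$, which at $p = 2$ equals $\frac{3}{4}$.

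For part (b), I would instead invoke part (b) of the preceding theorem with $p = 2$ and $|K : S| = 1$. Here the hypothesis $\frac{S}{Z(S,K)} \cong \mathbb{Z}_2 \times \mathbb{Z}_2$ matches the required $\frac{S}{Z(S,K)} \cong \mathbb{Z}_p \times \mathbb{Z}_p$, the index condition $|K : S| = 1$ is exactly what the second ``Further'' clause demands, and once more $2$ is the smallest prime dividing $|R|$. The clause then gives $\Pr(S,K) = \frac{p^2 + p - 1}{p^3}$, which at $p = 2$ equals $\frac{5}{8}$. Since every step reduces to a direct substitution into a clause whose hypotheses are visibly satisfied, there is no real obstacle; the only point deserving a moment's care is verifying that $2$ really is the smallest prime dividing $|R|$, which is precisely why I would handle that verification first and uniformly for both parts.
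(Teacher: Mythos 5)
Your proposal is correct and is exactly the argument the paper intends: the paper offers no written proof beyond ``the following corollary follows immediately,'' and the intended route is precisely specialization of the preceding theorem's two ``Further'' clauses at $p=2$. Your one substantive observation --- that $2\mid |S:Z(S,K)|$ forces $2\mid |R|$, so $2$ is automatically the smallest prime dividing $|R|$ --- is the right (and only) point that needs checking, and you handle it correctly.
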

\noindent We also have the following result.
\begin{proposition}
Let $S$ and $K$ be two subrings of $R$ such that $\frac {S}{Z(S, K)}\cong \mathbb Z_p\times \mathbb Z_p$. If $p$ is the smallest prime dividing $|R|$ and $|[s,K]| = p$ for all $s \in S \setminus Z(S, K)$ then $\Pr(S, K) = \frac {p^2 + p - 1}{p^3}$.
\end{proposition}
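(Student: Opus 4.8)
The plan is to compute $\Pr(S,K)$ directly from the second formula in Corollary \ref{formula1}, namely $\Pr(S,K) = \frac{1}{|S|}\sum_{s\in S}\frac{1}{|[s,K]|}$, by partitioning the summation index $s$ according to whether $s$ lies in $Z(S,K)$ or in $S \setminus Z(S,K)$. This reduces the whole problem to evaluating $|[s,K]|$ on each of the two pieces and counting how large each piece is.

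First I would handle the central part. For $s \in Z(S,K)$ we have $[s,k] = 0$ for every $k \in K$ by the definition of $Z(S,K)$, so $[s,K] = \{0\}$ and hence $|[s,K]| = 1$; this part contributes exactly $|Z(S,K)|$ to the inner sum. For the non-central part, the hypothesis $|[s,K]| = p$ for all $s \in S \setminus Z(S,K)$ makes each such term equal to $\frac{1}{p}$, contributing $\frac{|S| - |Z(S,K)|}{p}$. Thus the bracketed sum equals $|Z(S,K)| + \frac{|S| - |Z(S,K)|}{p}$.

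Next I would substitute the index information. Since $\frac{S}{Z(S,K)} \cong \mathbb{Z}_p \times \mathbb{Z}_p$, we have $|S : Z(S,K)| = p^2$, so $|Z(S,K)| = \frac{|S|}{p^2}$. Plugging this into the expression above and dividing by $|S|$ yields $\frac{1}{p^2} + \frac{1}{p}\left(1 - \frac{1}{p^2}\right)$, which simplifies to $\frac{p^2 + p - 1}{p^3}$, as required.

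There is essentially no hard obstacle here; the argument is a short direct computation, and the only genuine point to watch is that centrality forces $|[s,K]| = 1$ rather than $p$, so the two parts of the sum behave differently. The assumption that $p$ is the smallest prime dividing $|R|$ plays only a consistency role: for any $s \in S \setminus Z(S,K)$ one has $|[s,K]| = |K : C_K(s)| > 1$ by Lemma \ref{lemma1}, so the smallest admissible value of this index is $p$, and the hypothesis simply asserts that this minimum is attained for every non-central $s$.
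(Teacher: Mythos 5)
Your proposal is correct and follows exactly the paper's own argument: split the sum $\frac{1}{|S|}\sum_{s\in S}\frac{1}{|[s,K]|}$ from Corollary \ref{formula1} over $Z(S,K)$ (where each term is $1$) and $S\setminus Z(S,K)$ (where each term is $\frac{1}{p}$ by hypothesis), then substitute $|Z(S,K)|=\frac{|S|}{p^2}$. No differences worth noting.
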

\begin{proof}
If $p$ is the smallest prime dividing $|R|$ and $|[s,K]| = p$ for all $s \in S \setminus Z(S, K)$ then by Corollary \ref{formula1}, we have
\[
\Pr(S, K) = \frac{|Z(S,K)|}{|S|} + \frac{1}{|S|} \underset{s\in S\setminus Z(S, K)}{\sum}\frac{1}{p} = \frac {p^2 + p - 1}{p^3}.
\]
\end{proof}

We shall conclude this section by the following proposition which is an improvement of \cite[Theorem 2.13]{jutireka}.

\begin{proposition}
Let $S$ and $K$ be two subrings  of $R$ and $I$ be an ideal of $R$ such that $I\subseteq S \cap K$. Then $\Pr(S, K)\leq \Pr\left(\frac {S}{I},\frac {K}{I}\right)\Pr(I)$ where $\frac {S}{I}$ and $\frac {K}{I}$ are factor rings. The equality holds if $I \cap [S, R] = \{0\}$.
\end{proposition}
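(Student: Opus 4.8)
The plan is to reduce the asserted inequality, via the counting formula of Corollary~\ref{formula1}, to a purely combinatorial comparison of centralizer sums, and then to establish that comparison in two steps. Writing $\overline{x}=x+I$, Corollary~\ref{formula1} gives $|S||K|\Pr(S,K)=\sum_{s\in S}|C_K(s)|$, while $\Pr(S/I,K/I)=\frac{1}{|S/I||K/I|}\sum_{\overline{s}\in S/I}|C_{K/I}(\overline{s})|$ and $\Pr(I)=\frac{1}{|I|^2}\sum_{x\in I}|C_I(x)|$. Since $|S/I|\,|K/I|\,|I|^2=|S||K|$, the desired bound $\Pr(S,K)\le\Pr(S/I,K/I)\Pr(I)$ is equivalent to
\[
\sum_{s\in S}|C_K(s)|\ \le\ \Big(\sum_{\overline{s}\in S/I}|C_{K/I}(\overline{s})|\Big)\Big(\sum_{x\in I}|C_I(x)|\Big),
\]
and it is this inequality that I would prove.

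First I would bound each $|C_K(s)|$ from above. For $s\in S$ the reduction $k\mapsto\overline{k}$ carries $C_K(s)$ into $C_{K/I}(\overline{s})$, with kernel $C_K(s)\cap I=C_I(s)$, and by Lemma~\ref{lemma02} (applied with $H=K$, $N=I$, $x=s$) its image $(C_K(s)+I)/I$ is contained in $C_{K/I}(\overline{s})$. Hence $|C_K(s)|\le|C_I(s)|\,|C_{K/I}(\overline{s})|$. Summing over $s\in S$ and partitioning $S$ into its $|S/I|$ cosets of $I$—so that $s=s_0+i$ with $i\in I$ while $|C_{K/I}(\overline{s})|=|C_{K/I}(\overline{s_0})|$ is constant on each coset—I obtain
\[
\sum_{s\in S}|C_K(s)|\ \le\ \sum_{\overline{s_0}\in S/I}|C_{K/I}(\overline{s_0})|\sum_{i\in I}|C_I(s_0+i)|.
\]

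The crux is to show, for each fixed representative $s_0$, the inner estimate $\sum_{i\in I}|C_I(s_0+i)|\le\sum_{x\in I}|C_I(x)|$; this is the step I expect to be the main obstacle, since the summands on the left involve elements $s_0+i$ that need not lie in $I$. I would rewrite the left side as $|\{(i,x)\in I\times I:[s_0+i,x]=0\}|$ and count by fixing $x$: because $I$ is an ideal, $i\mapsto[i,x]$ is an additive endomorphism of $I$ with kernel $C_I(x)$, so the equation $[i,x]=-[s_0,x]$ has either no solution or exactly $|C_I(x)|$ solutions in $i$. Thus each $x$ contributes at most $|C_I(x)|$, which yields the inner estimate and, after combining, the required inequality.

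Finally, for the equality assertion I would exploit the hypothesis $I\cap[S,R]=\{0\}$ through the single observation that for $s\in S$ and $k\in K\subseteq R$ one has $[s,k]\in[S,R]$, so $[s,k]\in I$ forces $[s,k]=0$. This has two consequences. It makes the reduction $C_K(s)\to C_{K/I}(\overline{s})$ surjective: any $\overline{k}\in C_{K/I}(\overline{s})$ satisfies $[s,k]\in I$, hence $[s,k]=0$ and $k\in C_K(s)$, so the first estimate becomes an equality. It also gives $[s_0,x]=0$ for every $x\in I$, whence $C_I(s_0+i)=C_I(i)$ and the inner sum equals $\sum_{x\in I}|C_I(x)|$ exactly, so the second estimate is an equality as well. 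Both inequalities therefore collapse, giving $\Pr(S,K)=\Pr(S/I,K/I)\Pr(I)$.
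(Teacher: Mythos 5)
Your proof is correct and follows essentially the same route as the paper's: both decompose the centralizer sum through the quotient by $I$ via Lemma \ref{lemma02}, regroup over cosets of $I$, and finish by noting that the relevant solution set is either empty or a coset of a centralizer in $I$, with the same two inequalities collapsing under $I \cap [S,R] = \{0\}$. You merely sum over $S$ where the paper sums over $K$, which is immaterial since $\Pr(S,K)=\Pr(K,S)$.
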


\begin{proof}
 By Corollary \ref{formula1} and Lemma \ref{lemma02}, we have
\begin{align}\label{factor-eq1}
|S||K| \Pr(S,K)
%= & \underset{k \in K}{\sum} |C_S(k)| \\
= &\underset{P \in \frac{K}{I}}{\sum}\underset{k \in P}{\sum}\frac{|C_S(k)|}{|I \cap C_S(k)|} |C_I(k)| \nonumber\\
= &\underset{P \in \frac{K}{I}}{\sum}\underset{k \in P}{\sum}\left|\frac{C_S(k) + I}{I}\right| |C_I(k)| \nonumber\\
\leq &\underset{P \in \frac{K}{I}}{\sum}\underset{k \in P}{\sum}|C_{\frac{S}{I}}(k+I)||C_I(k)|\nonumber\\
%~\textup{using Lemma 2.12 of \cite{jutireka}}\\
%= &\underset{P\in \frac{K}{I}}{\sum} |C_{\frac{S}{I}}(P)| \underset{k \in P}{\sum}|C_I(k)|\\
= &\underset{P\in \frac{K}{I}}{\sum} |C_{\frac{S}{I}}(P)| \underset{u \in I}{\sum}|C_K(u) \cap P|.
\end{align}
Let $a + I = P$ where $a\in K\setminus I$. If $C_K(u)\cap P = \phi$  then $|C_K(u)\cap P| = 0$.   Therefore, $|C_K(u)\cap P|<|C_I(u)|$ as $|C_I(u)|\geq 1$. On the other hand, if $C_K(u)\cap P\neq \phi$ then there exist $x\in C_K(u)\cap P$ and $x = a + v$ for some  $v\in I$ which implies $x +I  = a + I = P$. Therefore,
\begin{align*}
C_K(u)\cap P
%=&(x + I)\cap C_K(u)\\
=(x + I)\cap (x + C_K(u))
= x + (I\cap C_K(u))
= x + C_I(u)
\end{align*}
and so $|C_K(u)\cap P| = |C_I(u)|$. Therefore, in both the cases, $|C_K(u)\cap P|\leq|C_I(u)|$ and so, by \eqref{factor-eq1}, we have
\begin{align*}
|S||K| \Pr(S,K)
%\leq &\underset{S \in \frac{K}{I}}{\sum} |C_{\frac{S}{I}}(P)| \underset{i\in I}{\sum}|%C_I(i)\cap P|\\
\leq &\underset{S \in \frac{K}{I}}{\sum} |C_{\frac{S}{I}}(P)| \underset{u\in I}{\sum}|C_I(u)|\\
=&\left|\frac{S}{I} \right| \left|\frac{K}{I} \right| \Pr \left(\frac{S}{I},\frac{K}{I}\right)|I|^2\Pr(I)\\
=&|S||K|\Pr \left(\frac{S}{I},\frac{K}{I}\right)\Pr(I).
%\textup{and we have~}\Pr(S,K)\leq &\Pr\left(\frac {S}{I},\frac {K}{I}\right)\Pr(I).
\end{align*}
Thus, first part of the result follows.

If $I\cap [S, R] = 0$ then equality holds in Lemma \ref{lemma02} and hence equality holds in \eqref{factor-eq1}. Further, if $P = a + I$ and $k\in P$ then $k = a + u$ for some $u\in I$ and $a\in K\setminus I$. Therefore, $k\in K$ and $u\in I\subseteq S$ and hence $[u, k]\in [S, K]$.  Also $u\in I$ and  $k \in K\subseteq R$ gives $uk, ku\in I$ and so $[u, k]\in I.$ Hence, $[u, k]\in [S, R]\cap I$ and so $k\in C_K(u)$. Therefore, $C_K(u)\cap P\neq \phi$ and  $|C_K(u)\cap P|=|C_I(u)|$. Hence the equality holds.
\end{proof}

\section{Isoclinism  and commuting probability}

The concept of isoclinism between groups was  introduced by Hall \cite{pH40} in 1940.  In 1995, Lescot \cite{pL95} showed that the commuting probabilities of two isoclinic finite groups are same. Recently,  Bukley, MacHale and Ni Sh$\acute{\textup{e}}$\cite{BMS} have introduced  $\mathbb Z$-isoclinism between two rings and showed that the commuting probabilities of two $\mathbb Z$-isoclinic finite rings are same. Further, Dutta, Basnet  and Nath  \cite{jutirekha2} have generalized the  concept of $\mathbb Z$-isoclinism between two rings as given in the following definition.
\begin{definition}
Let $R_1$ and $R_2$ be two rings with subrings $S_1, K_1$ and $S_2, K_2$ respectively such that $S_1\subseteq K_1$ and $S_2\subseteq K_2$. A pair of rings $(S_1,K_1)$ is said to be $\mathbb Z$-isoclinic to   $(S_2, K_2)$ if there exist additive group isomorphisms $\phi :\frac {K_1}{Z(S_1, K_1)}\rightarrow \frac {K_2}{Z(S_2, K_2)}$ such that $\phi \left(\frac {S_1}{Z(S_1, K_1)}\right) = \frac {S_2}{Z(S_2, K_2)}$  and $\psi :[S_1, K_1]\rightarrow [S_2, K_2]$ such that $\psi ([u_1, v_1])=[u_2, v_2]$ whenever $u_i \in S_i$,  $v_i\in K_i$ for  $i = 1, 2$; $\phi (u_1 +  Z(S_1, K_1)) = u_2 +  Z(S_2, K_2)$ and $\phi (v_1 +  Z(S_1, K_1)) = v_2 +  Z(S_2, K_2)$.
% Equivalently the following diagram commutes
%\vspace{.25cm}
%\begin{center}
%$
%\begin{CD}
%    \frac {S_1}{S_1\cap Z(K_1)}\otimes \frac {K_1}{S_1\cap Z(K_1)} @>\phi \otimes \phi>> \frac {S_2}{S_2\cap Z(K_2)} \otimes \frac {K_2}{S_2\cap Z(K_2)}\\
%   @VV{a_{(S_1,K_1)}}V  @VV{a_{(S_2,K_2)}}V\\
%   [S_1,K_1] @>\psi>> [S_2,K_2],
%\end{CD}
%$
%\end{center}
%where $\frac {S_i}{S_i\cap Z(K_i)}\otimes \frac {K_i}{S_i\cap Z(K_i)}$ denotes the tensor product of $\frac {S_1}{S_1\cap Z(K_1)}$ with $\frac {K_1}{S_1\cap Z(K_1)}$ and $a_{(S_i,K_i)}:\frac {S_i}{S_i\cap Z(K_i)}\otimes \frac {K_i}{S_i\cap Z(K_i)}\rightarrow [S_i,K_i]$ are well-defined maps given by $a_{(S_i,K_i)}((x_i+S_i\cap Z(K_i))\otimes (y_i+K_i\cap Z(K_i)))=[x_i,y_i]$ for all $x_i\in S_i$ and $y_i\in K_i$ for $i=1,2$ and $\phi \otimes \phi((x_1 +S_1\cap Z(K_1))\otimes (y_1 +S_1\cap Z(K_1))=(x_2 +S_2\cap Z(K_2))\otimes (y_2 +S_2\cap Z(K_2))$ whenever $\phi (x_1+S_1\cap Z(K_1))=x_2+S_2\cap Z(K_2)$ and $\phi (y_1+S_1\cap Z(K_1))=y_2+S_2\cap Z(K_2)$. The above diagram commutes means $a_{(S_2,K_2)}\circ (\phi \otimes \phi)=\psi \circ a_{(S_1,K_1)}$.
Such pair of mappings $(\phi,\psi)$ is called a   $\mathbb Z$-isoclinism between $(S_1, K_1)$ and $(S_2, K_2)$.
\end{definition}
Dutta, Basnet  and Nath  \cite{jutireka} also showed that $\Pr(S_1, R_1) = \Pr(S_2, R_2)$ if the pairs $(S_1,R_1)$ and $(S_2,R_2)$ are  $\mathbb Z$-isoclinic (see Theorem 3.3). In this section, we further generalize this result in the following way.

%\begin{lemma}
%Let $R_1$ and $R_2$ be two non-commutative rings with subrings $S_1, K_1$ and $S_2, K_2$ respectively. If $(\phi,\psi)$ is an isoclinism from $(S_1,K_1)$ to $(S_2,K_2)$ then $[s_1,K_1]$ and $[s_2,K_2]$ are isomorphic.
%\end{lemma}
%\begin{proof}
%Since $(\phi,\psi)$ is an isoclinism from $(S_1,K_1)$ to $(S_2,K_2)$, therefore $\phi :\frac {K_1}{S_1\cap Z(K_1)}\rightarrow \frac {K_2}{S_2\cap Z(K_2)}$ and $\psi :[S_1,K_1]\rightarrow [S_2,K_2]$ are additive group isomorphisms such that $\phi \left(\frac {S_1}{S_1\cap Z(K_1)}\right)= \frac {S_2}{S_2\cap Z(K_2)}$ and $\phi \left(\frac {K_1}{S_1\cap Z(K_1)}\right)= \frac {K_2}{S_2\cap Z(K_2)}$. For $s_1\in S_1$, we denote $\psi_1$ to be the restriction on $[s_1,K_1]$. Clearly we can see that $\psi_1$ is an isomorphism from $[s_1,K_1]$ to $[s_2,K_2]$. Hence $[s_1,K_1]\cong [s_2,K_2]$.
%\end{proof}

\begin{theorem}
Let $R_1$ and $R_2$ be two non-commutative rings with subrings $S_1, K_1$ and $S_2, K_2$ respectively. If  $(\phi,\psi)$ is a    $\mathbb Z$-isoclinism between $(S_1, K_1)$ and $(S_2, K_2)$ then
\[
{\Pr}_r(S_1, K_1) = {\Pr}_{\psi (r)}(S_2, K_2).
\]
\end{theorem}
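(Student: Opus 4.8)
The plan is to reduce everything to the computing formula of Theorem \ref{com-thm}, written in its second form
\[
{\Pr}_r(S, K) = \frac{1}{|S|} \underset{r \in [s, K]}{\underset{s \in S}{\sum}} \frac{1}{|[s, K]|},
\]
and then transport the right-hand side from $(S_1, K_1)$ to $(S_2, K_2)$ through the isomorphism $\phi$. Writing $Z_i := Z(S_i, K_i)$, the first observation I would make is that both the summation condition $r \in [s, K]$ and the summand $\frac{1}{|[s, K]|}$ are constant on the cosets of $Z(S, K)$ in $S$. Indeed, if $z \in Z(S, K)$ then $[z, k] = 0$ for every $k \in K$, so $[s + z, k] = [s, k]$ and hence $[s + z, K] = [s, K]$ as subsets of $R$. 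Grouping the sum by cosets of $Z(S, K)$, each of which contributes $|Z(S, K)|$ equal terms, gives the coset form
\[
{\Pr}_r(S, K) = \frac{1}{|S : Z(S, K)|} \underset{r \in [s, K]}{\underset{\bar{s} \in S/Z(S, K)}{\sum}} \frac{1}{|[s, K]|},
\]
where $\bar{s} = s + Z(S, K)$ and the quantities inside are independent of the chosen representative $s$.

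The heart of the argument, and the step I expect to be the main obstacle, is the claim that if $\phi(s_1 + Z_1) = s_2 + Z_2$ then
\[
\psi([s_1, K_1]) = [s_2, K_2].
\]
To prove $\subseteq$, take an arbitrary element $[s_1, k_1]$ of $[s_1, K_1]$ and choose $k_2 \in K_2$ with $\phi(k_1 + Z_1) = k_2 + Z_2$; the defining compatibility of the $\mathbb{Z}$-isoclinism, namely $\psi([u_1, v_1]) = [u_2, v_2]$ whenever $\phi(u_1 + Z_1) = u_2 + Z_2$ and $\phi(v_1 + Z_1) = v_2 + Z_2$, then yields $\psi([s_1, k_1]) = [s_2, k_2] \in [s_2, K_2]$. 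The reverse inclusion uses that $\phi$ is surjective onto $K_2/Z_2$: given $[s_2, k_2] \in [s_2, K_2]$, pick $k_1 \in K_1$ with $\phi(k_1 + Z_1) = k_2 + Z_2$, so that $[s_2, k_2] = \psi([s_1, k_1]) \in \psi([s_1, K_1])$. A small check needed along the way is that $[s_2, k_2]$ is unchanged if $s_2$ or $k_2$ is altered within its $Z_2$-coset, which holds because $Z_2 \subseteq S_2 \subseteq K_2$ forces $[z, k_2] = 0$ and $[s_2, z] = 0$ for $z \in Z_2$; this is what makes the compatibility condition, and hence the displayed claim, well posed.

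Two consequences follow immediately from this claim together with the bijectivity of $\psi$: first, $|[s_1, K_1]| = |[s_2, K_2]|$, and second, $r \in [s_1, K_1]$ if and only if $\psi(r) \in [s_2, K_2]$. I would finish by applying the restriction of $\phi$ to the isomorphism $S_1/Z_1 \to S_2/Z_2$ (which also gives $|S_1 : Z_1| = |S_2 : Z_2|$) as a change of summation variable $\bar{s}_1 \mapsto \bar{s}_2 = \phi(\bar{s}_1)$ in the coset form of the formula. Under this substitution the summation condition $r \in [s_1, K_1]$ becomes $\psi(r) \in [s_2, K_2]$, the summand $\frac{1}{|[s_1, K_1]|}$ becomes $\frac{1}{|[s_2, K_2]|}$, and the prefactor is preserved, so the coset form for ${\Pr}_r(S_1, K_1)$ is carried exactly onto the coset form for ${\Pr}_{\psi(r)}(S_2, K_2)$, completing the proof.
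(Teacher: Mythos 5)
Your proposal is correct and follows essentially the same route as the paper: rewrite ${\Pr}_r(S,K)$ via the second form of Theorem \ref{com-thm}, collapse the sum onto cosets of $Z(S,K)$, and transport it through $\phi$ using $|S_1:Z_1|=|S_2:Z_2|$, $|[s_1,K_1]|=|[s_2,K_2]|$, and $r\in[s_1,K_1]\Leftrightarrow\psi(r)\in[s_2,K_2]$. The only difference is that you explicitly prove the key identity $\psi([s_1,K_1])=[s_2,K_2]$ (via both inclusions and the well-posedness check on $Z_2$-cosets), which the paper asserts as an immediate consequence of the $\mathbb{Z}$-isoclinism; this is a welcome addition rather than a deviation.
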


\begin{proof}
%If $r = 0$ then the result follows from  \cite[Theorem 3.3]{jutireka}. If $r\neq 0$ then by Theorem \ref{com-thm}, we have
By Theorem \ref{com-thm}, we have
\begin{align*}
{\Pr}_r(S_1, K_1)
% = &\frac {1}{|S_1||K_1|}\underset{rs_1^{-1}\in K_1(s_1)}{\underset{s_1\in S_1\setminus %\{0\}}{\sum}}|C_{K_1}(s_1)|\\
=&\frac {|Z(S_1, K_1)|}{|S_1||K_1|}\underset{r \in [s_1, K_1]}{\underset{s_1 + Z(S_1, K_1)\in\frac {S_1}{Z(S_1, K_1)}}{\sum}|C_{K_1}(s_1)|}
\end{align*}
noting that $r \in [s_1, K_1]$ if and only if $r \in [s_1 + z, K_1]$ and $C_{K_1}(s_1) = C_{K_1}(s_1 + z)$ for all $z \in Z(S_1, K_1)$.
Now, by  Lemma \ref{lemma1}, we have
\[
{\Pr}_r(S_1, K_1) = \frac {|Z(S_1, K_1)|}{|S_1|}\underset{r \in [s_1, K_1]}{\underset{s_1 + Z(S_1, K_1)\in\frac {S_1}{Z(S_1, K_1)}}\sum} \frac {1}{|[s_1,K_1]|}.
\]
Similarly, it can be seen that
\[
{\Pr}_{\psi (r)}(S_2, K_2) = \frac {|Z(S_2, K_2)|}{|S_2|}\underset{\psi (r)\in [s_2, K_2]}{\underset{s_2 + Z(S_2, K_2)\in\frac {S_2}{Z(S_2, K_2)}}\sum}
\frac {1}{|[s_2,K_2]|}.
\]
Since $(\phi,\psi)$ is a    $\mathbb Z$-isoclinism between $(S_1, K_1)$ and $(S_2, K_2)$ we have  $\frac {|S_1|}{|Z(S_1, K_1)|} = \frac {|S_2|}{|Z(S_2, K_2)|}$, $|[s_1,K_1]| = |[s_2, K_2]|$ and $r \in [s_1, K_1]$ if and only if  $\psi (r) \in [s_2, K_2]$. Hence
\[
{\Pr}_r(S_1, K_1) = {\Pr}_{\psi (r)}(S_2, K_2).
\]
\end{proof}

%\begin{proposition}\label{prop3}
%Let $R_1$ and $R_2$ be two non-commutative rings with subrings $S_1, K_1$ and $S_2, K_2$ respectively such that $S_1 \subseteq K_1$ and $S_2 \subseteq K_2$. Let  $\phi : \frac{K_1}{Z(S_1, K_1)} \rightarrow \frac{K_2}{Z(S_2, K_2)}$  and  $\psi : [S_1, K_1] \rightarrow [S_2, K_2]$  be two additive group isomorphisms such that $\phi \left(\frac{S_1}{Z(S_1, K_1)}\right) = \frac{S_2}{Z(S_2, K_2)}$ and $a_{(S_2, K_2)} \circ (\phi \times \phi) = \psi \circ a_{(S_1, K_1)}$ where  $a_{(S_i, K_i)}: \frac{S_i}{Z(S_i, K_i)} \times \frac{K_i}{Z(S_i, K_i)} \to [S_i, K_i]$  are well defined maps given by
%\[
%a_{(S_i, K_i)}(x_i + Z(S_i, K_i),y_i + Z(S_i, K_i)) = [x_i, y_i]
%\]
% for all $x_i \in S_i, y_i \in K_i$ and  $i = 1, 2$;  and
%\[
%(\phi \times \phi)(x_1 + Z(S_1, K_1),y_1 + Z(S_1, K_1)) = (x_2 + Z(S_2, K_2),y_2 + Z(S_2, K_2))
%\]
%whenever
%$\phi(x_1 + Z(S_1, K_1)) = x_2 + Z(S_2, K_2)$ and $\phi(y_1 + Z(S_1, K_1)) = y_2 + Z(S_2, K_2)$. Then
%\[
%{\Pr}_r(S_1, K_1) = {\Pr}_{\psi (r)}(S_2, K_2).
%\]
%\end{proposition}

We conclude this paper by the following result.
\begin{proposition}
Let $R_1$ and $R_2$ be two non-commutative rings with subrings $S_1, K_1$ and $S_2, K_2$ respectively. If $\phi_1 : \frac {S_1}{Z(S_1, R_1)}\to \frac {S_2}{Z(S_2, R_2)}$, $\phi_2 :\frac {K_1}{Z(K_1, R_1)} \to \frac {K_2}{Z(K_2, R_2)}$ and $\psi : [S_1, K_1]\to [S_2, K_2]$ are additive group isomorphisms such that
\[
a_{(S_2, K_2)} \circ (\phi_1 \times \phi_2) = \psi \circ a_{(S_1, K_1)}
\]
where  $a_{(S_i, K_i)}: \frac{S_i}{Z(S_i, R_i)} \times \frac{K_i}{Z(K_i, R_i)} \to [S_i, K_i]$  are well defined maps given by
\[
a_{(S_i, K_i)}(x_i + Z(S_i, R_i), y_i + Z(K_i, R_i)) = [x_i, y_i]
\]
 for all $x_i \in S_i, y_i \in K_i$ and  $i = 1, 2$;  and
\[
(\phi_1 \times \phi_2)(x_1 + Z(S_1, R_1), y_1 + Z(K_1, R_1)) = (x_2 + Z(S_2, R_2), y_2 + Z(K_2, R_2))
\]
whenever
$\phi_1(x_1 + Z(S_1, R_1)) = x_2 + Z(S_2, R_2)$ and $\phi_2(y_1 + Z(K_1, R_1)) = y_2 + Z(K_2, R_2)$. Then
\[
{\Pr}_r(S_1, K_1) = {\Pr}_{\psi (r)}(S_2, K_2).
\]
\end{proposition}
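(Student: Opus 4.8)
The plan is to bypass any character- or conjugacy-theoretic machinery and instead reduce both sides to a raw count of commuting pairs that the commuting square $a_{(S_2,K_2)}\circ(\phi_1\times\phi_2)=\psi\circ a_{(S_1,K_1)}$ can transport directly. First I would record the observation that legitimizes the maps $a_{(S_i,K_i)}$: if $z\in Z(S_i,R_i)$ and $w\in Z(K_i,R_i)$, then $z$ and $w$ commute with every element of $R_i$, so $[s+z,k+w]=[s,k]$ for all $s\in S_i$ and $k\in K_i$. Hence the commutator is constant on each coset pair $(s+Z(S_i,R_i))\times(k+Z(K_i,R_i))$, and every such fibre of the quotient map $S_i\times K_i\to \frac{S_i}{Z(S_i,R_i)}\times\frac{K_i}{Z(K_i,R_i)}$ has the same size $|Z(S_i,R_i)|\,|Z(K_i,R_i)|$.

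Using \eqref{mainformula} and grouping the pairs $(s,k)$ with $[s,k]=r_i$ by their image in $\frac{S_i}{Z(S_i,R_i)}\times\frac{K_i}{Z(K_i,R_i)}$, this observation yields, for $(r_1,r_2)=(r,\psi(r))$,
\[
{\Pr}_{r_i}(S_i,K_i)=\frac{|a_{(S_i,K_i)}^{-1}(r_i)|\,|Z(S_i,R_i)|\,|Z(K_i,R_i)|}{|S_i||K_i|}=\frac{|a_{(S_i,K_i)}^{-1}(r_i)|}{|S_i:Z(S_i,R_i)|\,|K_i:Z(K_i,R_i)|}.
\]
It then remains to match the three quantities on the right for $i=1$ and $i=2$. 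Since $\phi_1$ and $\phi_2$ are additive group isomorphisms, $|S_1:Z(S_1,R_1)|=|S_2:Z(S_2,R_2)|$ and $|K_1:Z(K_1,R_1)|=|K_2:Z(K_2,R_2)|$, so the two denominators agree.

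The heart of the argument is then the numerator equality $|a_{(S_1,K_1)}^{-1}(r)|=|a_{(S_2,K_2)}^{-1}(\psi(r))|$. Because $\phi_1\times\phi_2$ and $\psi$ are bijections, the commuting square shows that $\phi_1\times\phi_2$ restricts to a bijection from $a_{(S_1,K_1)}^{-1}(r)$ onto $a_{(S_2,K_2)}^{-1}(\psi(r))$: a coset pair $(\bar x_1,\bar y_1)$ satisfies $a_{(S_1,K_1)}(\bar x_1,\bar y_1)=r$ if and only if $a_{(S_2,K_2)}\big((\phi_1\times\phi_2)(\bar x_1,\bar y_1)\big)=\psi(r)$, the reverse implication using injectivity of $\psi$. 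Combining this with the index equalities gives ${\Pr}_r(S_1,K_1)={\Pr}_{\psi(r)}(S_2,K_2)$. I expect the first step to need the most care, namely the passage from the definition of ${\Pr}_{r_i}$ to the coset-count formula, since it relies on the fibres of the quotient map having uniform size and on the commutator being genuinely constant on each fibre; both are precisely the well-definedness of $a_{(S_i,K_i)}$ already assumed in the hypotheses, so once that is invoked the remainder is purely a bijection count. (Alternatively one could mirror the proof of the preceding theorem by summing $\frac{1}{|[s,K_i]|}$ over cosets of $Z(S_i,R_i)$ and checking $\psi([s_1,K_1])=[s_2,K_2]$, but the raw count above is shorter.)
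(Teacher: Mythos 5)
Your proposal is correct and follows essentially the same route as the paper: both pass from the raw count of pairs $(s,k)$ with $[s,k]=r$ to a count of coset pairs in $\frac{S_i}{Z(S_i,R_i)}\times\frac{K_i}{Z(K_i,R_i)}$ (using that each fibre has uniform size $|Z(S_i,R_i)||Z(K_i,R_i)|$), then use injectivity of $\psi$, the commuting square, and bijectivity of $\phi_1\times\phi_2$ to identify the fibre of $a_{(S_1,K_1)}$ over $r$ with the fibre of $a_{(S_2,K_2)}$ over $\psi(r)$, and finally match the normalizing factors via the isomorphisms. The only cosmetic difference is that you compute the two probabilities separately and compare the three ingredients, whereas the paper writes a single chain of equalities.
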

\begin{proof}
For $i=1,2$ let $Z_i := Z(S_i,R_i)$ and ${Z_i}^\prime := Z(K_i,R_i)$. Then we have

\begin{tiny}
\begin{align*}
{\Pr}_r(S_1, K_1)& =\frac {1}{|S_1||K_1|}|\{(x_1,y_1)\in S_1\times K_1 : [s_1, k_1] = r\}|\\
&=\frac {|Z_1||{Z_1}^\prime|}{|S_1||K_1|}\left|\{(x_1 + Z_1, y_1 + {Z_1}^\prime)\in \frac {S_1}{Z_1}\times \frac {K_1}{{Z_1}^\prime}:a_{(S_1,K_1)}(x_1 + Z_1, y_1 + {Z_1}^\prime) = r\}\right|\\
&=\frac {|Z_1||{Z_1}^\prime|}{|S_1||K_1|}\left|\{(x_1 + Z_1, y_1 + {Z_1}^\prime) \in \frac {S_1}{Z_1}\times \frac {K_1}{{Z_1}^\prime} : \psi \circ a_{(S_1, K_1)}(x_1 + Z_1, y_1 + {Z_1}^\prime) = \psi(r)\}\right|\\
&=\frac {|Z_1||{Z_1}^\prime|}{|S_1||K_1|}\left|\{(x_1 + Z_1, y_1 + {Z_1}^\prime) \in \frac {S_1}{Z_1}\times \frac {K_1}{{Z_1}^\prime} : a_{(S_2,K_2)}\circ (\phi_1\times \phi_2)(x_1 + Z_1, y_1 + {Z_1}^\prime) = \psi(r)\}\right|\\
&=\frac {|Z_2||{Z_2}^\prime|}{|S_2||K_2|}\left|\{(x_2 + Z_2, y_2 + {Z_2}^\prime)\in \frac {S_2}{Z_2}\times \frac {K_2}{{Z_2}^\prime}:a_{(S_2, K_2)}(x_2 + Z_2, y_2 + {Z_2}^\prime)=\psi(r)\}\right|\\
&=\frac {|Z_2||{Z_2}^\prime|}{|S_2||K_2|}\left|\{(x_2 + Z_2, y_2 + {Z_2}^\prime)\in \frac {S_2}{Z_2}\times \frac {K_2}{{Z_2}^\prime} : [x_2 + Z_2, y_2 + {Z_2}^\prime] = \psi (r)\}\right|\\
&=\frac {1}{|S_2||K_2|}|\{(x_2,y_2)\in S_2\times K_2 : [x_2,y_2]=\psi(r)\}|\\
&={\Pr}_{\psi(r)}(S_2, K_2).
\end{align*}
\end{tiny}

\end{proof}

\end{document}